\documentclass[onefignum,onetabnum]{siamart171218}

\usepackage{lipsum}
\usepackage{amsfonts}
\usepackage{graphicx}
\usepackage{epstopdf}
\usepackage{hyperref}
\usepackage{algorithmic}
\ifpdf
    \DeclareGraphicsExtensions{.eps,.pdf,.png,.jpg}
\else
    \DeclareGraphicsExtensions{.eps}
\fi
\usepackage{amsopn}
\usepackage{todonotes}
\usepackage{stmaryrd}
\usepackage{mathabx}
\allowdisplaybreaks[4]
\usepackage{physics}
\usepackage{xifthen}
\usepackage{comment}
\usepackage{multirow}
\usepackage{booktabs}

\newsiamremark{remark}{Remark}

\DeclareMathOperator*{\argmin}{arg\,min}

\DeclareMathOperator{\spanfun}{span}
\newcommand{\fnorm}[1]{\norm{#1}_{\text{F}}}
\newcommand{\dimcont}[2]{{#1}_1 \times \cdots \times {#1}_{#2}}
\newcommand{\idxcont}[2]{#1_1, \dots, #1_{#2}}

\newcommand{\mhp}{multi-homogeneous-poly}

\newcommand{\tX}{\mathbf{X}}
\newcommand{\tY}{\mathbf{Y}}
\newcommand{\tT}{\mathbf{T}}

\newcommand{\tV}{\mathbf{V}}
\newcommand{\tu}[1]{\ifthenelse{\isempty{#1}}{\vec{\mathbf{u}}}
{\mathbf{u}^{[#1]}}}
\newcommand{\tx}[1]{\ifthenelse{\isempty{#1}}{\vec{\mathbf{x}}}
{\mathbf{x}^{[#1]}}}
\newcommand{\tv}[1]{\ifthenelse{\isempty{#1}}{\vec{\mathbf{v}}}
{\mathbf{v}^{[#1]}}}
\newcommand{\tw}[1]{\ifthenelse{\isempty{#1}}{\vec{\mathbf{w}}}
{\mathbf{w}^{[#1]}}}

\newcommand{\ocalU}{\mathcal{U}}
\newcommand{\ocalW}{\mathcal{W}}

\newcommand{\calJ}{\mathcal{J}}
\newcommand{\calM}{\mathcal{M}}
\newcommand{\calR}{\mathcal{R}}

\newcommand{\bbN}{\mathbb{N}}
\newcommand{\bbP}{\mathbb{P}}
\newcommand{\bbR}{\mathbb{R}}
\newcommand{\bbX}{\mathbb{X}}

\title{Tensor ring decomposition: optimization landscape and one-loop
convergence of alternating least squares}

\headers{Tensor Ring Decomposition}{Ziang Chen, Yingzhou Li,
and Jianfeng Lu}

\author{Ziang Chen
\thanks{School of Mathematical Sciences, Peking University
(\email{chenziang19970214@pku.edu.cn}).}
\and Yingzhou Li
\thanks{Department of Mathematics, Duke University 
(\email{yingzhou.li@duke.edu}).}
\and Jianfeng Lu
\thanks{Department of Mathematics, Department of Chemistry, and
Department of Physics, Duke University 
(\email{jianfeng@math.duke.edu}).}
}

\begin{document}

\maketitle	

\begin{abstract}
In this work, we study the tensor ring decomposition and its
associated numerical algorithms. We establish a sharp transition
of algorithmic difficulty of the optimization problem as the bond
dimension increases: On one hand, we show the existence of spurious
local minima for the optimization landscape even when the tensor ring
format is much over-parameterized, \textit{i.e.}, with bond dimension
much larger than that of the true target tensor.  On the other hand,
when the bond dimension is further increased, we establish one-loop
convergence for alternating least square algorithm for the tensor ring
decomposition. The theoretical results are complemented by numerical
experiments for both local minima and the one-loop convergence for
the alternating least square algorithm.
\end{abstract}

\begin{keywords}
    tensor ring decomposition, spurious local minima, alternating
    least square, one-loop convergence.
\end{keywords}


\section{Introduction}

Tensors are ubiquitous especially for representing high dimensional
functions or operators.  However, due to the curse of dimensionality,
both the storage cost and the computational cost of vanilla tensors
scale exponentially as the dimension increases.

To overcome the curse of dimensionality, tensor network decomposition
has been widely used, in particular in the physics community,
to represent specific families of high dimensional tensors, with
much fewer degrees of freedom than an arbitrary tensor. Perhaps
the most famous tensor network is the \emph{matrix product state}
(MPS)~\cite{Affleck1988, Schollwock2011}. The matrix product
state forms the basis of the density matrix renormalization
group algorithm~\cite{White1992, White1993}, which has been
widely used in physics and chemistry literature and extremely
successful for one-dimensional many-body physical systems, see
\textit{e.g.}, reviews~\cite{Chan2011, Hallberg2006, Schollwock2005,
WoutersVanNeck2014}. In fact, it has been shown rigorously that the
ground state for a gapped one-dimensional many-body physical system can
be efficiently represented by a matrix product state~\cite{Arad2017,
Brandao2015, Hastings2007}.  To go beyond and deal with more
general physical systems, the matrix product state has also been
extended to other tensor network decomposition formats, including
PEPS~\cite{Verstraete2004a}, MERA~\cite{Vidal2007}, just to name a few.
See~\cite{Orus2014} for a recent review on tensor networks.

In the mathematics literature, the matrix product state is
known as the \emph{tensor train} (TT) format~\cite{Oseledets2011,
Oseledets2009}, which is a special case of the hierarchical Tucker
format~\cite{Grasedyck2010, Hackbusch2009}. The study of tensor
networks, including their algebraic and geometric structures and
associated algorithms, have received increasing attention from the
mathematics community as well, see \textit{e.g.},~\cite{Espig2015,
Landsberg2012, RobevaSeigal, Rohwedder2013, Ye2018}. On the
algorithmic side, most of the existing works focus on the tensor train
format~\cite{Bigoni2016, Holtz2012, Oseledets2011}. The convergence
analysis for the construction and the compression algorithms of TT
is established in~\cite{Espig2015, Rohwedder2013}.

The \emph{tensor ring} (TR) format extends TT format
to accommodate periodic boundary conditions, hence, is
suggested as the ansatz for periodic one-dimensional physical
systems~\cite{Verstraete2004}. Unfortunately, the construction and
the compression of TR turn out to be much more difficult than that
of TT. Most algorithms working efficiently for TTs cannot be easily
extended to TRs. \emph{Alternating least square} (ALS) algorithm is
one exception, but it still relies on carefully designed sampling
techniques and initial guesses~\cite{Khoo2017a} to be efficient.
In addition to the difficulty in designing efficient algorithms, the
inability to compute the exact minimal bond dimension TR decomposition
is numerically demonstrated recently~\cite{Batselier2018}. Although
some success of TR is achieved in compressing tensors in
practice~\cite{Khoo2017a, Wang2017c, Zhao2018, Zhao2016}, TR
decomposition and TR operations remain a challenging problem.
The mathematical understanding of TR format and associated algorithms
is still rather sparse. Motivated by such a gap, in this work,
we analyze the tensor ring decomposition. While TR is arguably the
simplest tensor network beyond the TT format, our study may also shed
some light on more complicated tensor network formats.

Let us also mention that there are other tensor decomposition
formats besides tensor networks in the literature, such as tensor
rank decomposition (often known as the CANDECOMP/PARAFAC (CP)
decomposition) and Tucker decomposition. Many works have been
devoted in designing efficient algorithms for finding near-optimal
CP and Tucker decomposition (see review article~\cite{Kolda2009}
and references therein).

\subsection{Contribution}

In this work, we analyze the optimization landscape of the TR
decomposition and prove the existence of spurious local minima even
if the TR format is over-parameterized. More precisely, we propose
a particular $d$-th order tensor as the target tensor, which is of
TR format with bond dimension $r+1$, and a spurious local minimum
is identified in the space of TR with bond dimension $r^{d-1}$. Note
that the bond dimension scales exponentially with $d$. Such a spurious
local minimum casts trouble for optimization problem associated with
TR decomposition. Although the spurious local minimum identified might
not be strict, we numerically validate that ALS in some sense cannot
escape from a small neighborhood of the spurious local minimum.

Our second result establishes the one-loop convergence of ALS algorithm
for TR decomposition if we even further lift the bond dimension of
the proposed TR space. More precisely, for any target $d$-th order
tensor of TR format with bond dimension $r$ satisfying some full-rank
conditions and starting from a random initial TR, ALS almost surely
converges to the target tensor after one loop iteration, when the
bond dimension of the proposed TR space is $r^{d-1}$.

Combining two results together, we establish a sharp transition between
the triviality of ALS, \textit{i.e.}, the one-loop convergence,
and the existence of spurious local minima. Up to some subtle
differences, the results shown here for TR are similar to that for
TT in~\cite{Rohwedder2013}. Investigation of any of these subtle
differences leads to the intrinsic difference between TT and TR,
\textit{i.e.}, TT of fixed bond dimension is a closed set whereas TR
of fixed bond dimension is not a closed set~\cite{Landsberg2012}.

\subsection{Organization}

The rest of this paper is organized as follows. In Section~\ref{sec:
Preliminaries}, we introduce some basics for tensor ring format. In
Section~\ref{sec: spurious local minima}, we introduce tensor ring
decomposition and analyze the optimization landscape. The existence
of spurious local minima is established. In Section~\ref{sec: ALS},
we introduce the alternating least square algorithm for tensor ring
decomposition and show the one-loop convergence. In Section~\ref{sec:
numerical}, we give some numerical validations of the theoretical
results. The paper is concluded in Section~\ref{sec: conclusion}.

\section{Preliminaries}
\label{sec: Preliminaries}

In this section, we first introduce some tensor notations that will
be used throughout the paper and then provide the precise definition
of the tensor ring format.

\subsection{Tensor notations}

While tensor is a powerful tool in many areas, the corresponding
notations are somewhat complicated. In this section, we introduce
a few common notations in representing a tensor. A $d$-th order
tensor $\tX$ is a $d$-dimensional array, \textit{i.e.}, $\tX
\in \bbR^{\dimcont{n}{d}}$, where $\vec{n} = (\idxcont{n}{d})
\in \bbN_+^d$ is the size of the tensor. $\vec{n}$ is also called
the external dimension of $\tX$.  Entries of $\tX$ are denoted
as $\tX(\idxcont{x}{d})$, where $1 \leq x_i \leq n_i$ denotes the
$i$-th index of the tensor for $i = 1, 2, \dots, d$.  Matlab colon
(:) notation is powerful in representing contiguous entries of a
tensor. For example, let $\tX \in \bbR^{n_1 \times n_2 \times n_3}$
be a 3-rd order tensor: $\tX(:, x_2, x_3)$ denotes a vector in
$\bbR^{n_1}$ and $\tX(:, x_2, :)$ denotes a $n_1 \times n_3$ matrix.

For two tensors of the same size
$\mathbf{X},\mathbf{Y}\in\mathbb{R}^{n_1\times n_2\times\cdots\times
n_d}$, the inner product between $\mathbf{X}$ and $\mathbf{Y}$ is
defined via
\begin{equation}
    \langle \tX,\tY\rangle=\sum_{x_1=1}^{n_1} \sum_{x_2=1}^{n_2} \cdots
    \sum_{x_d=1}^{n_d}  \tX(\idxcont{x}{d})\tY(\idxcont{x}{d}).
\end{equation}
The Frobenius norm is used as a distance measure of $\mathbf{X}$,
which is given as
\begin{equation} \label{eq:fnorm}
    \fnorm{\tX}=\langle \tX,\tX\rangle^{1/2} = \left( \sum_{x_1=1}^{n_1}
    \sum_{x_2=1}^{n_2} \cdots \sum_{x_d=1}^{n_d} \tX(\idxcont{x}{d})^2
    \right)^{1/2}.
\end{equation}

Suppose that $\tX \in \bbR^{\dimcont{n}{d_1}}$ and $\tY \in
\bbR^{\dimcont{m}{d_2}}$ are two tensors. The tensor product of $\tX$
and $\tY$ is $\tX \otimes \tY \in \bbR^{\dimcont{n}{d_1} \times
\dimcont{m}{d_2}}$, with entries given by
\begin{equation}
    (\tX \otimes \tY)(\idxcont{x}{d_1}, \idxcont{y}{d_2}) =
    \tX(\idxcont{x}{d_1}) \tY(\idxcont{y}{d_2}),
\end{equation}
where $x_i = 1, 2, \dots, n_i$, $i = 1, 2, \dots, d_1$ and
$y_j = 1, 2, \dots, m_j$, $j = 1, 2, \dots, d_2$.

\subsection{Tensor ring} 

A $d$-th order tensor ring is a periodic consecutive product of $d$
$3$-rd order tensors, which can be viewed as a periodic version of a
$d$-th order tensor train. Before rigorously defining tensor ring,
we first recall a periodic index $i$ of periodicity $d$ as $i =
\mathrm{mod}(i-1, d) + 1$. Hence we have $i \in \{1, 2, \dots,
d\}$ for any integer $i$. A sequence $\vec{t} = (\idxcont{t}{d})$
is periodically indexed if $t_i$ for $i < 1$ or $i > d$ is allowed
and $t_i \equiv t_{\mathrm{mod}(i-1,d)+1}$, e.g., $t_0 \equiv t_d$,
$t_{d+1} \equiv t_1$, etc. These periodic indices and periodically
indexed sequences significantly reduce the redundancy of notations
in tensor ring. Define
\begin{equation}
    \ocalU_{\vec{r},\vec{n}}^d = \bigtimes_{i=1}^d \bbR^{r_i
    \times n_i \times r_{i+1}},
\end{equation}
where $\vec{n} = (\idxcont{n}{d})$ and $\vec{r} = (\idxcont{r}{d})$
denote the external and internal dimension respectively. An element
in $\ocalU_{\vec{r},\vec{n}}^d$ is denoted as $\tu{} = \left(\tu{1},
\tu{2}, \dots, \tu{d}\right) \in \ocalU_{\vec{r},\vec{n}}^d$, where
$\tu{i} \in \bbR^{r_i \times n_i \times r_{i+1}}$ for $i = 1, 2,
\dots, d$. In order to shorten notations, we use two notations for
two unfolding tensors of $\tu{i}$, \textit{i.e.}, $\tu{i}_{k_1,k_2} =
\tu{i}(k_1,:,k_2)$ is a vector in $\mathbb{R}^{n_i}$ and $\tu{i}(x_i)
= \tu{i}(:,x_i,:)$ is a $r_{i} \times r_{i+1}$ matrix.

Let $\tau$ denote the mapping from $\ocalU_{\vec{r},\vec{n}}^d$
to a $d$-th order tensor of external dimension $\vec{n}$ as {}
\begin{equation}
    \tau(\tu{}) = \sum_{\idxcont{k}{d}} \tu{1}_{k_1,k_2} \otimes
    \tu{2}_{k_2,k_3} \otimes \cdots \otimes \tu{d}_{k_d,k_1}, \quad
    \tu{} \in \ocalU_{\vec{r},\vec{n}}^d.
\end{equation}
Throughout this paper, we abuse the mapping notation $\tau$ for any
external and internal dimension $\vec{n}$ and $\vec{r}$ and order $d$.
Elements of $\tau(\tu{})$ can be evaluated as
\begin{equation}
    \tau(\tu{})(\idxcont{x}{d}) = \tr \left( \tu{1}(x_1) \tu{2}(x_2)
    \cdots \tu{d}(x_d) \right),
\end{equation}
for $x_i = 1, 2, \dots, n_i$, $i = 1, 2, \dots, d$.

A $d$-th order tensor $\tT$ of external dimension $\vec{n}$ has
tensor ring format of internal dimension $\vec{r}$ if there exists a
$\tu{} \in \ocalU_{\vec{r},\vec{n}}^d$ such that $\tT = \tau(\tu{})$.
We denote the collection of such tensors as
\begin{equation}
    \calR_{\vec{r},\vec{n}}^d = \left\{ \tT \mid \tT = \tau(\tu{}),
    \quad \tu{} \in \ocalU_{\vec{r},\vec{n}}^d \right\},
\end{equation}
which includes all TRs with rank bounded by $r$.  We call $\max_{1\leq
i\leq d} r_i$ the \emph{bond dimension} of the TR format. TR format
is a special case of tensor networks, where the underlying network is
a one-dimensional ring. More detailed discussions on tensor networks
can be found in \textit{e.g.}, \cite{Orus2014}.

Each element in $\ocalU_{\vec{r},\vec{n}}^d$ corresponds to one
tensor in $\calR_{\vec{r},\vec{n}}^d$. However, the reverse is not
true. Actually each element in $\calR_{\vec{r},\vec{n}}^d$ corresponds
to infinitely many elements in $\ocalU_{\vec{r},\vec{n}}^d$.  Let
$\tu{}$ be an element in $\ocalU_{\vec{r},\vec{n}}^d$ corresponding
to $\tT \in \calR_{\vec{r},\vec{n}}^d$.  Given any tuple of $d$
invertible matrices, $\vec{A} = \left( \idxcont{A}{d} \right)$, where
$A_i \in \mathrm{GL}(r_i, \bbR)$ is a $r_i\times r_i$ invertible
matrix for $i=1,2,\cdots,d$, we define $\theta_{\vec{A}}(\tu{}) =
\tv{} = \left(\tv{1}, \tv{2}, \dots, \tv{d} \right) $ via
\begin{equation}
    \tv{i}(x_i) = A_i \tu{i}(x_i) A_{i+1}^{-1}, \quad x_i = 1, 2,
    \dots, n_i, \text{ and } i = 1, 2, \dots, d,
\end{equation}
where periodic indexing is applied, i.e., $A_{d+1} = A_1$. It can
be easily seen that $\tau(\theta_{\vec{A}}(\tu{})) = \tau(\tu{})$,
\textit{i.e.}, $\theta_{\vec{A}}(\tu{})$ and $\tu{}$ correspond to
the same $d$-th order tensor. This is called \emph{gauge freedom}
or \emph{gauge invariance}. The orbit
\begin{equation}
    \calM_{\tu{}} = \left\{ \theta_{\vec{A}}(\tu{}) \mid \vec{A} \in
    \bigtimes_{i=1}^d \mathrm{GL}(r_i,\bbR) \right\}
\end{equation}
is called the manifold due to the gauge freedom.

\section{Tensor ring decomposition and spurious local minima}
\label{sec: spurious local minima}

Tensor ring provides an efficient representation of high-order
tensors, especially for those tensors with underlying physical
geometry being a one-dimensional ring, and thus the periodicity becomes
natural. However, finding such a tensor ring representation of a given
high-order tensor is highly nontrivial in practice. In this section, we
first cast the tensor ring decomposition as a constrained optimization
problem, which is widely used in the literature~\cite{Khoo2017a,
Zhao2016}. Then an explicit spurious local minimum is constructed for
the relaxed version of the constrained optimization. Such a spurious
local minimum to some degree explains why tensor ring decomposition
is much more difficult than tensor train decomposition and matrix
factorization in practice. Recall that tensor train decomposition
has one-loop convergence for exactly parameterized constraint
set~\cite{Oseledets2011, Rohwedder2013} (the one-loop convergence for
tensor ring will be discussed in Section~\ref{sec: one-loop}); and it
is well known that matrix factorization including matrix eigenvalue
decomposition~\cite{Li2019} and low-rank factorization~\cite{Ge2017}
does not have spurious local minima even in the under-parameterized
regime.

Let $\tT \in \bbR^{\dimcont{n}{d}}$ be the target $d$-th order
tensor.  \emph{Tensor ring decomposition} aims to find a $\tu{}
\in \ocalU_{\vec{r},\vec{n}}^d$ such that the distance between $\tT$
and $\tau(\tu{})$ is minimized. If the tensor Frobenius norm is used
as the distance, we can formulate the tensor ring decomposition as
the following constrained optimization problem:
\begin{equation} \label{eq:trdobj}
    \min_{\tu{}\, \in\, \ocalU_{\vec{r},\vec{n}}^d} \frac{1}{2}
    \fnorm{ \tT - \tau(\tu{}) }^2.
\end{equation}
Similar to tensor train decomposition and matrix factorization,
the optimization \eqref{eq:trdobj} is a constrained non-convex
optimization problem.

Next, we would show that the tensor ring decomposition optimization
problem~\eqref{eq:trdobj} has spurious local minima even in a relaxed
constraint set. In the following, we will first construct an explicit
spurious local minimum for a specific $d$-th order tensor and then we
remark that the specific tensor can be generalized to a set of tensors
and the spurious local minimum exists for any tensor in the set.

To simplify the notations, we assume $\vec{r} = r$ and $\vec{n} =
n = r^2 + 1$. Here and in the rest of the paper, we abuse notations
$\vec{r} = r$ and $ \vec{n} = n$ meaning that $\vec{r} = (r, \dots,
r)$ and $\vec{n} = (n, \dots, n)$ respectively.  The choice of
$n=r^2+1$ comes from our specific construction of the target tensor:
If we consider a TR format with the bond dimension being $r$, $r^2$
is a large enough external dimension since the dimension of the space
spanned by $\tu{i}_{k_1,k_2},\ k_1,k_2=1,2,\dots,r$, is smaller than
or equal to $r^2$ for any $i=1,2,\dots,d$. Then for constructing the
target tensor, we add an additional orthogonal term, which enlarges
the bond dimension from $r$ to $r+1$ and the external dimension from
$r^2$ to $n=r^2+1$.  A discussion on $n \geq r^2+1$ cases is deferred
to the end of this section.

Further, we denote the \emph{lexicographical order} of a multi-tuple,
\textit{i.e.},
\begin{equation}
    \pi(\idxcont{k}{d}) \coloneq 1 + \sum_{i=1}^d (k_i - 1) r^{d-i}
\end{equation}
for $1 \leq \idxcont{k}{d} \leq r$. For example, for a $2$-tuple,
the lexicographical order function is $\pi(k_1,k_2) = (k_1 - 1) r +
k_2$ for $1 \leq k_1, k_2 \leq r$.

The specific $d$-th order tensor of bond dimension $r+1$ is
constructed as
\begin{equation} \label{eq:T0}
    \tT_0 = \sum_{\idxcont{k}{d} = 1}^r \left( \bigotimes_{i=1}^d
    e_{\pi(k_{i+1},k_i)} \right) + \bigotimes_{i=1}^d e_{n},
\end{equation}
where $e_j$ is an indicator vector of length $n$ with one at $j$-th
entry and zero at every other entry. By the definition of TR format,
we can see that $\tT_0 \in \calR_{r+1,n}^d$.

Let us consider a relaxed optimization problem~\eqref{eq:trdobj}
for $\tT_0$
\begin{equation} \label{eq:trdobj0} \min_{\tu{}\, \in\,
    \ocalU_{r^{d-1},n}^d} \frac{1}{2} \fnorm{ \tT_0 - \tau(\tu{}) }^2,
\end{equation}
where the constraint set $\ocalU_{r+1,n}^d$ is relaxed to
$\ocalU_{r^{d-1},n}^d$, \textit{i.e.}, the bond dimension is increased
from $r+1$ to $r^{d-1}$, which is much larger. For simplicity of
notation, we denote the objective function as $f_0(\tu{}) = \frac{1}{2}
\fnorm{\tT_0 - \tau(\tu{})}^2$.  Obviously, the objective function
at a global minimum is 0.

We now take a specific point $\tu{}_0 \in \ocalU_{r^{d-1},n}^d$ as
\begin{equation} \label{eq:u0}
    \begin{split}
        & \tu{}_0 = (\tu{1}, \tu{2}, \cdots, \tu{d}) \text{ with,} \\
        & \tu{i}_{\pi(\idxcont{p}{d-1}),\pi(\idxcont{q}{d-1})} =
        \delta_{p_1 q_1} \cdots \delta_{p_{i-1}q_{i-1}}
        \delta_{p_{i+1}q_{i+1}} \cdots \delta_{p_{d-1}q_{d-1}}
        e_{\pi(p_i,q_i)}, \text{ and,}\\
        & \tu{d}_{\pi(\idxcont{p}{d-1}),\pi(\idxcont{q}{d-1})} =
        \delta_{p_2 q_1} \cdots \delta_{p_{d-1} q_{d-2}}
        e_{\pi(p_1,q_{d-1})},
    \end{split}
\end{equation}
where $1 \leq \idxcont{p}{d-1}, \idxcont{q}{d-1} \leq r$ and $1 \leq
i \leq d-1$. $\tT_0$ and $\tu{}_0$ for $d=3$ and $r=2$ are illustrated
explicitly in Appendix~\ref{visualize}.

We first prove a property which is essential in the
discussions below.

\begin{proposition}\label{prop:u0}
    For $\tu{}_0$ defined in \eqref{eq:u0}, it holds that
    \begin{equation}
        \begin{split}
            \sum_{k_2, k_3, \dots, k_{d-1} = 1}^m
            & \tu{1}_{\pi(\idxcont{p}{d-1}),k_2} \otimes \tu{2}_{k_2, k_3}
            \otimes \\
            & \cdots \otimes \tu{d-2}_{k_{d-2}, k_{d-1}} \otimes
            \tu{d-1}_{k_{d-1}, \pi(\idxcont{q}{d-1})}
            =  \bigotimes_{i=1}^{d-1} e_{\pi(p_i,q_i)}
        \end{split}\label{eq:1to1-d}
    \end{equation}
    for any $1 \leq \idxcont{p}{d-1}, \idxcont{q}{d-1} \leq r$.
\end{proposition}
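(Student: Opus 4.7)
The plan is to re-parameterize each intermediate summation index via the lexicographical order $\pi$, and then use the Kronecker deltas in the definition \eqref{eq:u0} to collapse the entire sum to a single nonzero term whose value is precisely the claimed tensor product. Concretely, for each $j \in \{2, \dots, d-1\}$ I write $k_j = \pi(s_1^{(j)}, \dots, s_{d-1}^{(j)})$, so the sum over $k_j$ becomes a sum over all tuples $\vec{s}^{(j)} \in \{1, \dots, r\}^{d-1}$.

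Next I read off the nonvanishing conditions. From \eqref{eq:u0}, for each $i \in \{1, \dots, d-1\}$ the slice $\tu{i}_{\pi(\vec{a}), \pi(\vec{b})}$ is zero unless $a_\ell = b_\ell$ for every $\ell \neq i$, in which case it equals $e_{\pi(a_i, b_i)}$. Applying this to the $d-1$ factors appearing on the left-hand side of \eqref{eq:1to1-d} yields a linear system in the $s_\ell^{(j)}$'s: from $\tu{1}$, $s_\ell^{(2)} = p_\ell$ for $\ell = 2, \dots, d-1$; from $\tu{i}$ with $2 \leq i \leq d-2$, $s_\ell^{(i+1)} = s_\ell^{(i)}$ for every $\ell \neq i$; and from $\tu{d-1}$, $s_\ell^{(d-1)} = q_\ell$ for $\ell = 1, \dots, d-2$.

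Then I would solve the system one coordinate $\ell$ at a time. Fixing $\ell$, the middle relations pin $s_\ell^{(i)}$ to be constant in $i$ except possibly across the single step $i = \ell \to i = \ell+1$, and combined with the boundary conditions this forces
\[
    s_\ell^{(i)} = \begin{cases} p_\ell, & 2 \leq i \leq \ell, \\ q_\ell, & \ell+1 \leq i \leq d-1, \end{cases}
\]
with the convention that only the second branch applies when $\ell = 1$ and only the first branch applies when $\ell = d-1$. Hence exactly one tuple of intermediate indices survives, and on that tuple the three types of factors evaluate to $e_{\pi(p_1, q_1)}$, $e_{\pi(p_i, q_i)}$ for $2 \leq i \leq d-2$, and $e_{\pi(p_{d-1}, q_{d-1})}$ respectively. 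Taking the tensor product yields $\bigotimes_{i=1}^{d-1} e_{\pi(p_i, q_i)}$, as claimed.

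I expect the main obstacle to be purely notational rather than conceptual: three layers of indices (the core index $i$, the lexicographic slot $\ell$, and the intermediate sum variable $j$) must be kept separate, and the degenerate coordinates $\ell = 1$ and $\ell = d-1$ need to be handled carefully since one of the two boundary conditions is absent in each case. The illustration for $d = 3$, $r = 2$ in Appendix~\ref{visualize} provides a useful sanity check for the general indexing before writing out the argument in full.
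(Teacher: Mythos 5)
Your proposal is correct and amounts to the same direct computation as the paper's proof: both exploit the Kronecker-delta structure of \eqref{eq:u0} to show the sum over intermediate indices collapses to a single surviving term, the paper by contracting the factors sequentially from the left and you by writing all constraints at once and solving them coordinate-by-coordinate. The per-coordinate bookkeeping, including the degenerate cases $\ell=1$ and $\ell=d-1$, is handled correctly.
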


\begin{proof} [Proof of Proposition~\ref{prop:u0}] We can prove this
proposition by the direct computation:
\begin{equation*}
    \begin{split}
        &\sum_{k_2, k_3, \dots, k_{d-1}=1}^m
        \tu{1}_{\pi(p_1,\dots,p_{d-1}),k_2} \otimes
        \tu{2}_{k_2, k_3} \otimes \cdots \otimes
        \tu{d-1}_{k_{d-1},\pi(q_1,\dots,q_{d-1})}\\
        =&\sum_{k_3, \dots, k_{d-1}=1}^m\sum_{p_1'=1}^r e_{\pi(p_1,p_1')}
        \otimes \tu{2}_{\pi(p_1',p_2,\dots,p_d), k_d}\otimes \cdots
        \otimes \tu{d-1}_{k_{d-1},\pi(q_1,\dots,q_{d-1})}\\
        = & \cdots \\
        =&\sum_{p_1',\cdots,p_{d-2}'=1}^r e_{\pi(p_1,p_1')}
        \otimes  \cdots\otimes e_{\pi(p_{d-2},p_{d-2}')}\otimes
        \tu{d-1}_{\pi(p_1',\cdots,p_{d-2}',p_{d-1}),
        \pi(q_1,\cdots,q_{d-2},q_{d-1})}\\
        =&\sum_{p_1',\dots,p_{d-2}'=1}^r e_{\pi(p_1,p_1')} \otimes
        \cdots \otimes e_{\pi(p_{d-2},p_{d-2}')} \delta_{p_1' q_1}\cdots
        \delta_{p_{d-2}',q_{d-2}}e_{\pi(p_{d-1},q_{d-1})}\\
        =& e_{\pi(p_1,q_1)}\otimes e_{\pi(p_2,q_2)}\otimes\cdots\otimes
        e_{\pi(p_{d-1},q_{d-1})}.
    \end{split}
\end{equation*}
  
\end{proof}

Theorem~\ref{thm:locmin} below states that $\tu{}_0$ as in \eqref{eq:u0}
is a local minimum for \eqref{eq:trdobj0} with nonzero objective
function, \textit{i.e.}, the problem \eqref{eq:trdobj0} has a
spurious local minimum. Both $\tT_0$ in \eqref{eq:T0} and $\tu{}_0$
in \eqref{eq:u0} are abstract. The idea of $\tu{}_0$ is that, we want
to construct a TR format whose bond dimension is as large as possible
and that has properties similar to \eqref{eq:1to1-d} which is essential
in the proof of Theorem~\ref{thm:locmin}. Then we come to $\tu{}_0$ and
its corresponding bond dimension $r^{d-1}$. $\tT_0$ is then constructed
by adding an orthogonal term to $\tau(\tu{}_0)$.

We first provide the definition of local minimum used
in this paper.

\begin{definition} \label{def:locmin}
    $\tu{}$ is a local minimum of a real-valued function $f(\cdot)$
    if there exists a $\eta > 0$ such that for any $\norm{\tv{}} <
    \eta$, $f(\tu{}+\tv{}) \geq f(\tu{})$.
\end{definition}

Due to the non-strict inequality in Definition~\ref{def:locmin}, the
local minimum throughout this paper is also non-strict. $\norm{\cdot}$
can be any norm of tensor ring since norms are equivalent
in a finite-dimensional vector space. In the rest of this paper,
the maximum norm of tensor ring $\norm{\cdot}_{\max}$ is used for
simplicity, \textit{i.e.}, $\norm{\tv{}}_{\max}$ denotes the maximum
of the absolute values of entries in $\tv{}$, \textit{i.e.},
\begin{equation*}
    \norm{\tv{}}_{\max} \coloneq \max_{i,k,x,k'}\abs{\tv{i}(k,x,k')}.
\end{equation*}
A tensor ring $\tu{}$ is a \emph{spurious local minimum} of $f(\cdot)$
if $\tu{}$ is a local minimum and $f(\tu{}) > \min_{\tv{}} f(\tv{})$.

\begin{theorem} \label{thm:locmin}
    For $d\geq 3$, $\tu{}_0 \in \ocalU_{r^{d-1},n}^d$ as in
    \eqref{eq:u0} is a local minimum of problem \eqref{eq:trdobj0}
    and $f_0(\tu{}_0) > 0$. Hence $\tu{}_0$ is a spurious local minimum
    of \eqref{eq:trdobj0}.
\end{theorem}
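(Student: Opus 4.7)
The plan is first to evaluate $\tau(\tu{}_0)$ exactly, and then exploit the fact that $\tu{i}(n)=0$ for every $i$ so that the residual at $\tu{}_0$ is ``protected'' from small perturbations. I would begin by computing each entry $\tau(\tu{}_0)(x_1,\ldots,x_d) = \tr(\tu{1}(x_1)\cdots\tu{d}(x_d))$: the explicit sparse form \eqref{eq:u0} makes every matrix $\tu{i}(x_i)$ have a single nonzero entry determined by a multi-index through $\pi$, and chasing those multi-indices around the cycle (the first $d-1$ factors via Proposition~\ref{prop:u0}, the last factor $\tu{d}$ by its analogous definition) should give
\[
    \tau(\tu{}_0) \;=\; \tT_0 \;-\; \bigotimes_{i=1}^{d} e_n .
\]
Hence the residual tensor is $\tT_0 - \tau(\tu{}_0) = \bigotimes_{i=1}^d e_n$ and $f_0(\tu{}_0)=\tfrac{1}{2}>0$, which already shows that $\tu{}_0$ is non-global.

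For the local-minimum property, I would decompose the squared error inside $f_0(\tu{}_0+\tv{})$ according to the set $S=\{i:x_i=n\}$. Since $\tu{i}(n)=0$ for all $i$, the $i$-th factor inside $\tau(\tu{}_0+\tv{})(x_1,\ldots,x_d)=\tr\bigl(\prod_i(\tu{i}(x_i)+\tv{i}(x_i))\bigr)$ collapses to $\tv{i}(n)$ whenever $i\in S$. Because $\tT_0$ vanishes on all $S$ except $\varnothing$ and $\{1,\ldots,d\}$, every block with $|S|\in\{1,\ldots,d-1\}$ and the block $S=\varnothing$ contribute nonnegative terms, while $S=\{1,\ldots,d\}$ contributes $\tfrac{1}{2}\bigl(1-\tr(\tv{1}(n)\cdots\tv{d}(n))\bigr)^2$. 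Expanding this last square produces the only potentially negative cross term $-\tr(\tv{1}(n)\cdots\tv{d}(n))$, of size $O(\|\tv{}\|_{\max}^d)$.

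To absorb the cross term I would extract a positive quadratic lower bound from the $|S|=1$ blocks. For $S=\{j\}$, the leading-order (in $\tv{}$) contribution equals $\tfrac{1}{2}\sum_{(x_k)_{k\ne j}}\tr(\tv{j}(n)\,M_j)^2$ with the cyclic product
\[
    M_j \;=\; \tu{j+1}(x_{j+1})\cdots\tu{d}(x_d)\,\tu{1}(x_1)\cdots\tu{j-1}(x_{j-1}) .
\]
A second round of index-chasing, analogous to Proposition~\ref{prop:u0} but now carried once around the ring and treating the asymmetric $\tu{d}$ separately, should show that each $M_j$ is a matrix unit $E_{\alpha,\beta}$, and that as $(x_k)_{k\ne j}$ ranges over $\{1,\ldots,r^2\}^{d-1}$ the pair $(\alpha,\beta)$ ranges bijectively over $\{1,\ldots,r^{d-1}\}^2$. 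Consequently the quadratic form equals $\tfrac{1}{2}\|\tv{j}(n)\|_{\mathrm{F}}^2$, and summing over $j$ yields the lower bound $\tfrac{1}{2}\sum_{j=1}^d\|\tv{j}(n)\|_{\mathrm{F}}^2$.

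To combine, note that $|\tr(\tv{1}(n)\cdots\tv{d}(n))|\le r^{d-1}\prod_j\|\tv{j}(n)\|_{\mathrm{F}}$, which by AM--GM is bounded by a constant multiple of $\bigl(\sum_j\|\tv{j}(n)\|_{\mathrm{F}}^2\bigr)^{d/2}$ and is therefore $o\bigl(\sum_j\|\tv{j}(n)\|_{\mathrm{F}}^2\bigr)$ for $d\ge 3$ as $\tv{}\to 0$. The $O(\|\tv{}\|_{\max}^3)$ corrections inside the $|S|=1$ bounds, which couple $\tv{j}(n)$ with $\tv{i}(x_i)$ for $x_i\in\{1,\ldots,r^2\}$, are absorbed by further shrinking the neighborhood; all remaining blocks are nonnegative. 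This yields $f_0(\tu{}_0+\tv{})\ge f_0(\tu{}_0)$ on a sufficiently small maximum-norm ball, and $\tu{}_0$ is therefore a spurious local minimum. The main obstacle is the spanning claim of the third paragraph: one must verify rigorously that the $r^{2(d-1)}$ cyclic products $M_j$ cover all $r^{2(d-1)}$ matrix units, using the full structure of $\tu{}_0$ and the asymmetric role of $\tu{d}$; this is what forces the inflated bond dimension $r^{d-1}$, and the hypothesis $d\ge 3$ enters precisely to make the negative cross term of strictly higher order than the positive quadratic.
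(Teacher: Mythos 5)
Your proposal is correct and follows essentially the same route as the paper's proof: compute $\tau(\tu{}_0)=\tT_0-\bigotimes_{i=1}^d e_n$, isolate the single negative degree-$d$ cross term $-\tr(\tv{1}(n)\cdots\tv{d}(n))$, and dominate it by the positive quadratic $\tfrac12\sum_j\fnorm{\tv{j}(n)}^2$ extracted from the entries with exactly one index equal to $n$ — your ``matrix-unit bijection'' claim is precisely the index-chasing in \eqref{eq:S2gen} and Figure~\ref{fig:S2gen}, and your AM--GM absorption is the paper's comparison of $O(\norm{\tv{}}_{\max,n}^3)$ against $\norm{\tv{}}_{\max,n}^2$. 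The only presentational difference is that you decompose the squared error by the set $S=\{i:x_i=n\}$ while the paper first applies an inner-product expansion and then lower-bounds $\fnorm{\tV}^2$; these are the same estimates.
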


The proof of Theorem~\ref{thm:locmin} consists of two parts. First
we demonstrate that $\tau(\tu{}_0)$ is the first summation part of
$\tT_0$ as in \eqref{eq:T0}.  Hence $f_0(\tu{}_0) = \frac{1}{2} >
0$. Next, we prove that there exists a small constant $\eta$ such that
for any $\norm{ \tv{} }_{\max} < \eta$, we have $f_0(\tu{}_0+\tv{})
\geq f_0(\tu{}_0)$. Therefore, $\tu{}_0$ is a local minimum in the
topology deduced by the norm $\norm{\tv{}}_{\max}$.  Since $\tT_0$
can be exactly represented by a tensor ring in $\ocalU_{r^{d-1},n}^d$,
$\tu{}_0$ is a spurious local minimum.

\begin{proof}[Proof of Theorem~\ref{thm:locmin}] 
For simplicity, we drop all subscripts $0$ of $\tT_0$, $\tu{}_0$ and
$f_0$. Further, we denote the bond dimension of $\ocalU_{r^{d-1},n}^d$
as $m = r^{d-1}$. Combining \eqref{eq:1to1-d} together with the
contraction of $\tu{d}$, we have
\begin{equation} \label{eq:diffTtau}
    \begin{split}
        &\qquad  \tau(\tu{}) = \sum_{\idxcont{k}{d} = 1}^m
        \tu{1}_{k_1, k_2} \otimes \tu{2}_{k_2, k_3} \otimes \cdots
        \otimes \tu{d}_{k_d k_1} \\ & =  \sum_{\idxcont{p}{d-1},
        \idxcont{q}{d-1} = 1}^r e_{\pi(p_1, q_1)} \otimes
        \cdots \otimes e_{\pi(p_{d-1}, q_{d-1})} \otimes
        \tu{d}_{\pi(\idxcont{q}{d-1}),\pi(\idxcont{p}{d-1})}\\ & =
        \sum_{\idxcont{q}{d-1}, p_{d-1} = 1}^r e_{\pi(q_2,q_1)}
        \otimes e_{\pi(q_3,q_2)} \otimes \cdots \otimes
        e_{\pi(q_{d-1}, q_{d-2})} \otimes e_{\pi(p_{d-1},q_{d-1})}
        \otimes e_{\pi(q_1,p_{d-1})}\\ & =  \tT - \bigotimes_{i=1}^d
        e_n.
    \end{split}
\end{equation}
Therefore, the objective function of $\tu{}$ is strictly positive,
\begin{equation}
    f(\tu{}) = \frac{1}{2} \fnorm{ \bigotimes_{i=1}^d e_n}^2 =
    \frac{1}{2} > 0.
\end{equation}
One more point about $\tau(\tu{})$, that will become important later,
is that $\tau(\tu{})$ has empty outer most layer, \textit{i.e.},
\begin{equation}\label{u-out-0}
    \tau(\tu{})(\idxcont{x}{d}) = 0, \quad \text{if } x_1 = n,
    \text{or } x_2 = n, \text{or } \dots, \text{or } x_d = n. 
\end{equation}
Hence $\tau(\tu{})$ is orthogonal to $\bigotimes_{i=1}^d e_n = \tT -
\tau(\tu{})$.

Next, we investigate the property of the neighborhood of $\tu{}$.
For any point $\tv{} \in \ocalU_{m,n}^d$, we have
\begin{equation} \label{eq:difff}
    \begin{split}
        f(\tu{}+\tv{}) - f(\tu{}) & = \frac{1}{2} \fnorm{ \tT -
        \tau(\tu{}+\tv{}) }^2 - \frac{1}{2} \fnorm{ \tT -
        \tau(\tu{}) }^2 \\
        & = \frac{1}{2} \fnorm{
        \tau(\tu{}+\tv{}) - \tau(\tu{}) }^2 - \left\langle \tT -
        \tau(\tu{}), \tau(\tu{}+\tv{}) -
        \tau(\tu{}) \right\rangle \\
        & = \frac{1}{2} \fnorm{ \tau(\tu{}+\tv{}) - \tau(\tu{})
        }^2 - \left\langle \bigotimes_{i=1}^d e_n, \tau(\tu{}+\tv{})
        \right\rangle\\
        & = \frac{1}{2} \fnorm{ \tau(\tu{}+\tv{}) - \tau(\tu{})
        }^2 -\sum_{\idxcont{k}{d} = 1}^m \prod_{i=1}^d
        \tv{i}(k_i,n,k_{i+1})\\
        & = \frac{1}{2} \fnorm{ \tau(\tu{}+\tv{}) - \tau(\tu{})
        }^2+O\left(\norm{\tv{}}_{\max,n}^3\right),
    \end{split}
\end{equation}
where the second equality is due to the definition of tensor Frobenius
norm as~\eqref{eq:fnorm}, the third equality adopts the result in
\eqref{eq:diffTtau} and orthogonality between $\tau(\tu{})$ and $\tT
- \tau(\tu{})$, the fourth equality comes from the direct evaluation
of the inner product, and $\norm{\tv{}}_{\max,n}$ is defined as
\begin{equation*}
    \norm{\tv{}}_{\max,n}=\max_{j,k,k'}\left\{\abs{
        \tv{j}(k, n, k')}\right\}\leq \norm{\tv{}}_{\max}.
\end{equation*}

In order to show that $\tu{}$ is a local minimum of $f$,
we need to show that the second-order term of $\frac{1}{2}
\fnorm{ \tau(\tu{}+\tv{}) - \tau(\tu{}) }^2$ has positive
coefficient. Denote $\tV$ as the difference between $d$-th order
tensor $\tau(\tu{}+\tv{})$ and $\tau(\tu{})$. For any fixed $p_i,
q_i$, $i=1,2,\dots,d-1$, the inner product of $\tV$ with $\left(
\bigotimes_{i=1}^{d-1}e_{\pi(p_i,q_i)} \right) \otimes e_n$ yields
\begin{equation}\label{eq:Velem}
    \begin{split}
        & \quad \biggl\langle \tV, \left(
        \bigotimes_{i=1}^{d-1}e_{\pi(p_i,q_i)} \right) \otimes e_n
        \biggr\rangle \\
        & = \left\langle \tau(\tu{}+\tv{}) - \tau(\tu{}), \left(
        \bigotimes_{i=1}^{d-1}e_{\pi(p_i,q_i)} \right) \otimes e_n
        \right\rangle \\
        & = \sum_{\idxcont{k}{d} = 1}^m \left\langle \bigotimes_{i=1}^d
        \left( \tu{i}_{k_i, k_{i+1}} + \tv{i}_{k_i, k_{i+1}} \right),
        \left( \bigotimes_{i=1}^{d-1}e_{\pi(p_i,q_i)} \right) \otimes
         e_n  \right\rangle \\
        & = \sum_{\idxcont{k}{d} = 1}^m \tv{d}(k_d, n, k_1) \cdot
        \prod_{i=1}^{d-1} \left( \tu{i}(k_i, \pi(p_i,q_i), k_{i+1})
        + \tv{i}(k_i, \pi(p_i,q_i), k_{i+1}) \right) \\
        & = \sum_{\idxcont{k}{d} = 1}^m \tv{d}(k_d,n,k_1) \cdot
        \prod_{i=1}^{d-1} \tu{i}(k_i, \pi(p_i,q_i), k_{i+1}) +
        O\left(\norm{\tv{}}_{\max,n}\cdot \norm{\tv{}}_{\max}\right)\\
        & =: S + O\left(\norm{\tv{}}_{\max,n}\cdot
        \norm{\tv{}}_{\max}\right),
    \end{split}
\end{equation}
where the third equality is due to \eqref{u-out-0} and the last
equality defines $S$. The definition \eqref{eq:u0} implies that for any
$i = 1, 2, \dots, d-1$, $\tu{i}(k_i, \pi(p_i, q_i), k_{i+1}) \neq 0$
if and only if all coordinates of $k_i$ and $k_{i+1}$ are the same ,
except the $i$-th one which is $p_i$ and $q_i$ for $k_i$ and $k_{i+1}$
respectively. Therefore, we know that
\begin{equation*}
    \prod_{i=1}^{d-1} \tu{i}(k_i, \pi(p_i, q_i), k_{i+1}) \neq 0,
\end{equation*}
if and only if $k_i = \pi(\idxcont{q}{i-1}, p_i, \dots, p_{d-1})$
for all $i = 1, \dots, d$. Hence the $S$ part can be rewritten as
\begin{equation} \label{eq:S2}
    S = \tv{d}(\pi(\idxcont{q}{d-1}), n,
    \pi(\idxcont{p}{d-1}))=O\left(\norm{\tv{}}_{\max,n}\right).
\end{equation}
Substituting \eqref{eq:S2} into \eqref{eq:Velem}, we obtain a lower
bound on the square of an element of $\tV$:
\begin{equation} \label{eq:sqrV}
    \begin{split}
         &\tV(\pi(p_1,q_1),\dots,\pi(p_{d-1},q_{d-1}),n)^2\\
         &= \left\langle \tV, \left(\bigotimes_{i=1}^{d-1}
         e_{\pi(p_i,q_i)} \right) \otimes e_n \right\rangle^2 =  S^2+
         O\left(\norm{\tv{}}_{\max,n}^2\cdot \norm{\tv{}}_{\max}\right)
         \\
         &= \abs{ \tv{d}(\pi(\idxcont{q}{d-1}), n,
         \pi(\idxcont{p}{d-1}) ) }^2 +
         O\left(\norm{\tv{}}_{\max,n}^2\cdot
         \norm{\tv{}}_{\max}\right).
    \end{split}
\end{equation}
Hence, we have 
\begin{equation} \label{eq:sumV}
    \begin{split}
        & \sum_{\idxcont{x}{d-1} = 1}^{n-1} \tV(\idxcont{x}{d-1},
        n )^2 \\ 
        & = \sum_{\idxcont{p}{d-1}, \idxcont{q}{d-1} =
        1}^r \tV(\pi(p_1,q_1), \dots, \pi(p_{d-1},q_{d-1}),n)^2\\ 
        & = \sum_{\idxcont{p}{d-1}, \idxcont{q}{d-1} =
        1}^r \abs{ \tv{d}(\pi(\idxcont{q}{d-1}), n,
        \pi(\idxcont{p}{d-1}) ) }^2\\
        &\qquad\qquad + O\left(\norm{\tv{}}_{\max,n}^2\cdot
        \norm{\tv{}}_{\max}\right)\\
        & = \sum_{k_1,k_d=1}^m \abs{ \tv{d}(k_d, n, k_1) }^2  +
        O\left(\norm{\tv{}}_{\max,n}^2\cdot \norm{\tv{}}_{\max}\right).
    \end{split}
\end{equation}
In the derivation from \eqref{eq:Velem} to \eqref{eq:sumV}, the only
step relies on index $d$ is \eqref{eq:S2} and all other steps can be
generalized to other index $j$ with the notion of periodic index. 
Notice that with periodic indexing, we have
\begin{equation} \label{eq:S2gen}
    \begin{split}
        & \prod_{i = j+1}^{j-1} \tu{i}(k_i,\pi(p_{i},q_{i}),k_{i+1}) \\
        & =  \prod_{i = j+1}^{j-1}
        \tu{i}(\pi(\idxcont{s^i}{d-1}),
        \pi(p_{i},q_{i}), \pi(\idxcont{s^{i+1}}{d-1})) \\
        & = \delta_{s^d_2, s^1_1} \cdots \delta_{s^d_{d-1}, s^1_{d-2}}
        \delta_{s^d_1, p_d}
        \delta_{s^1_{d-1}, q_d} \times  \\
        & \qquad \times \prod_{\substack{i=j+1 \\ i \neq d}}^{j-1} 
        \delta_{s^i_{1}, s^{i+1}_{1}} \cdots \delta_{s^i_{i-1},
        s^{i+1}_{i-1}}
        \delta_{s^i_{i+1}, s^{i+1}_{i+1}}
        \cdots \delta_{s^i_{d-1}, s^{i+1}_{d-1}} \delta_{s^i_i, p_i}
        \delta_{s^{i+1}_i, q_i} \\
    \end{split}
\end{equation}
where $\delta_{\cdot,\cdot}$ is the Kronecker delta, $k_i =
\pi(\idxcont{s^i}{d-1})$, and any $s$ is an integer in the interval
$[1,r]$ for $j\in\{1,\dots,d-1\}$. Through a careful derivation,
\eqref{eq:S2gen} is nonzero if and only if $k_{j} = \pi(q_1, \dots,
q_{j-1}, q_{j+1}, \dots, q_d)$, $k_{j+1} = \pi(p_d, p_1, \dots,
p_{j-1}, p_{j+1}, \dots, p_{d-1})$, and other $k_i$ is given in
Figure~\ref{fig:S2gen}.

\begin{figure}[ht]
    \centering
    \includegraphics[width=0.8\textwidth]{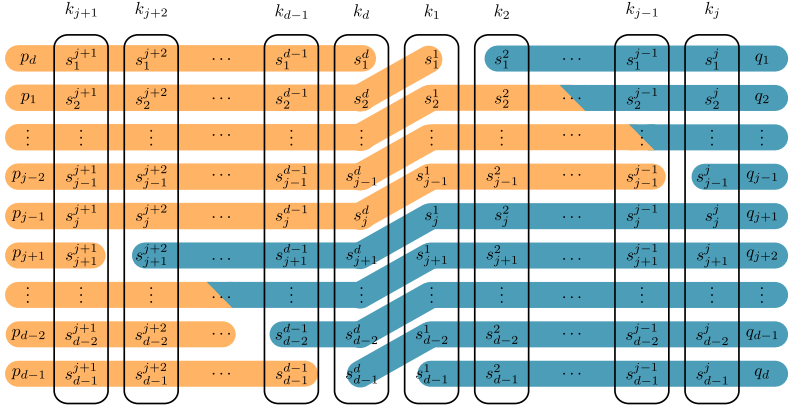}
    \caption{Derivation of generalized $S$ term. Orange lines and
    blue lines indicate equivalent relation of $p$ and $q$
    respectively.}
    \label{fig:S2gen}
\end{figure}

Similar to \eqref{eq:sqrV}, we can obtain a lower bound on the square
of an element of $\tV$
\begin{equation} \label{eq:sqrVgen}
    \begin{split}
        & \tV(\pi(p_1,q_1), \dots, \pi(p_{j-1},q_{j-1}), n,
        \pi(p_{j+1},q_{j+1}), \dots, \pi(p_d,q_d))^2\\ 
        & = \abs{ \tv{j}(\pi(q_1, \dots, q_{j-1},
        q_{j+1}, \dots, q_d), n,
        \pi(p_d, p_1, \dots, p_{j-1}, p_{j+1}, \dots, p_{d-1}) ) }^2
        \\
        &\qquad  + O\left(\norm{\tv{}}_{\max,n}^2 \cdot
        \norm{\tv{}}_{\max}\right).
    \end{split}
\end{equation}
Hence, similarly, we have 
\begin{equation}\label{eq:sumVgen}
    \begin{split}
        &\sum_{\idxcont{x}{j-1}, x_{j+1},\dots, x_d = 1}^{n-1}
        \tV(\idxcont{x}{j-1}, n, x_{j+1}, \dots, x_d)^2 \\
        &= \sum_{k_j,k_{j+1}=1}^m \abs{ \tv{j}(k_j, n, k_{j+1}) }^2 +
        O\left(\norm{\tv{}}_{\max,n}^2\cdot \norm{\tv{}}_{\max}\right),
    \end{split}
\end{equation} 
for any $j = 1, \dots, d$.

It follows from \eqref{eq:sumVgen} that
\begin{equation} \label{eq:boundV}
    \begin{split}
        \fnorm{\tV}^2 & \geq \sum_{j=1}^d \sum_{x_1, \dots, x_{j-1},
        x_{j+1}, \dots, x_d = 1}^{n-1} \tV(x_1, \dots, x_{j-1}, n,
        x_{j+1}, \dots, x_{d})^2 \\
        & = \sum_{j=1}^d \sum_{k_j, k_{j+1} = 1}^m \abs{ \tv{j}(k_j,
        n, k_{j+1})}^2 + O\left(\norm{\tv{}}_{\max,n}^2\cdot
        \norm{\tv{}}_{\max}\right)\\
        &\geq \left(1+O\left(\norm{\tv{}}_{\max}\right)\right)\cdot
        \norm{\tv{}}_{\max,n}^2.
    \end{split}
\end{equation}

Substituting \eqref{eq:boundV} into \eqref{eq:difff}, when $\eta$ is
sufficiently small, we have
\begin{equation}\label{eq:f(u+v)-f(u)>=0}
    \begin{split}
        f(\tu{}+\tv{}) - f(\tu{}) &= \frac{1}{2} \fnorm{
        \tau(\tu{}+\tv{}) - \tau(\tu{}) }^2 -\sum_{\idxcont{k}{d} =
        1}^m \prod_{i=1}^d \tv{i}(k_i,n,k_{i+1})\\
        &= \frac{1}{2} \fnorm{\tV}^2 +
        O\left(\norm{\tv{}}_{\max,n}^3\right) \\
        &\geq  \frac{1+O\left(\norm{\tv{}}_{\max}\right)}{2} \cdot
        \norm{\tv{}}_{\max,n}^2 + O\left(\norm{\tv{}}_{\max,n}^3\right)
        \geq 0,
    \end{split}
\end{equation}
for sufficiently small $\norm{\tv{}}_{\max}$.  Since the minimum value
of \eqref{eq:trdobj0} is zero, $\tu{}$ is a spurious local minimum.
\end{proof}

Theorem~\ref{thm:locmin} states that, for a particular $d$-th
order tensor as \eqref{eq:T0}, when the target tensor is of
TR format with bond dimension $r+1$, there exists at least one
spurious local minima even when the restricted bond dimension of
the problem \eqref{eq:trdobj} is $r^{d-1}$, much larger than $r+1$.
Through a more detailed analysis, we can locally illustrate that
when $f_0(\tu{}_0+\tv{}) = f_0(\tu{}_0)$ the constructed tensors are
identical, \textit{i.e.}, $\tau(\tu{}_0+\tv{}) = \tau(\tu{}_0)$.

\begin{proposition}\label{locmin:orbit}
    Under the same assumptions as in Theorem~\ref{thm:locmin}, for a
    positive constant $\eta$ which is small enough, any point $\tv{}
    \in \ocalU_{m,n}^d$ with $\norm{\tv{}}_{\max}<\eta$ satisfies
    $f_0(\tu{}_0+\tv{}) = f_0(\tu{}_0)$ if and only if $\tau(\tu{}_0 +
    \tv{}) = \tau(\tu{}_0)$, \textit{i.e.}, $\tu{}_0$ and $\tu{}_0+\tv{}$
    are tensor ring formats of the same $d$-th order tensor.
\end{proposition}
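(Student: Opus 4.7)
The plan is to use the quantitative comparison already established in the proof of Theorem~\ref{thm:locmin}, specifically the identity in \eqref{eq:f(u+v)-f(u)>=0} together with the lower bound \eqref{eq:boundV}, to pin down exactly when equality $f_0(\tu{}_0+\tv{}) = f_0(\tu{}_0)$ can occur. The ``if'' direction is immediate: if $\tau(\tu{}_0+\tv{}) = \tau(\tu{}_0)$, then by definition of $f_0$ the two values of $f_0$ agree.

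For the ``only if'' direction, I would start from the chain of equalities in \eqref{eq:f(u+v)-f(u)>=0}, which gives, with $\tV := \tau(\tu{}_0+\tv{}) - \tau(\tu{}_0)$,
\begin{equation*}
    0 = f_0(\tu{}_0+\tv{}) - f_0(\tu{}_0) = \frac{1}{2}\fnorm{\tV}^2 - \sum_{\idxcont{k}{d}=1}^m \prod_{i=1}^d \tv{i}(k_i,n,k_{i+1}).
\end{equation*}
The subtracted sum is a product of $d \geq 3$ factors each bounded by $\norm{\tv{}}_{\max,n}$, so it is $O(\norm{\tv{}}_{\max,n}^3)$. Substituting the lower bound \eqref{eq:boundV} yields
\begin{equation*}
    \bigl(1 + O(\norm{\tv{}}_{\max})\bigr)\norm{\tv{}}_{\max,n}^2 \leq O(\norm{\tv{}}_{\max,n}^3),
\end{equation*}
which, once $\eta$ is taken small enough that the implicit constants are dominated by the leading $1$, forces $\norm{\tv{}}_{\max,n} = 0$. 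In other words, all entries $\tv{i}(\cdot,n,\cdot)$ must vanish identically.

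Once these outer-layer entries vanish, the cubic remainder in the original identity is exactly zero, so the identity collapses to $\fnorm{\tV}^2 = 0$, i.e., $\tau(\tu{}_0+\tv{}) = \tau(\tu{}_0)$, as desired. The only nontrivial step is the quadratic-versus-cubic comparison, but the required uniform control on the $O(\cdot)$ constants over a small $\norm{\cdot}_{\max}$ ball was already produced inside the proof of Theorem~\ref{thm:locmin}; no new estimate is needed, and the main obstacle is simply bookkeeping to confirm that the same $\eta$ works for both the lower bound on $\fnorm{\tV}^2$ and the upper bound on the cubic remainder.
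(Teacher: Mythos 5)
Your proposal is correct and follows essentially the same route as the paper's proof: both directions are handled identically, and the ``only if'' direction is exactly the paper's argument that equality in \eqref{eq:f(u+v)-f(u)>=0} forces $\norm{\tv{}}_{\max,n}=0$ via the quadratic-versus-cubic comparison, after which the identity collapses to $\fnorm{\tV}^2=0$. The paper simply states this more tersely; you have spelled out the same comparison in detail.
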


\begin{proof}[Proof of Proposition~\ref{locmin:orbit}]
It can be easily seen that $\tau(\tu{}_0 + \tv{}) = \tau(\tu{}_0)$
implies $f_0(\tu{}_0+\tv{}) = f_0(\tu{}_0)$.

Now we consider the reverse direction. Suppose that $\eta$ is small
enough. For any $\tv{}$ such that $\norm{\tv{}}_{\max}<\eta$
and $f_0(\tu{}_0+\tv{}) = f_0(\tu{}_0)$, the equality in
\eqref{eq:f(u+v)-f(u)>=0} implies that $\norm{\tv{}}_{\max,n}=0$,
\textit{i.e.},
\begin{equation}
    \tv{j}(k_j, n, k_{j+1}) = 0,\quad \forall\,1 \leq j \leq d, \,
    1 \leq k_j, k_{j+1} \leq m,
\end{equation}
which leads to
\begin{equation}
    f_0(\tu{}_0+\tv{}) - f_0(\tu{}_0) = \frac{1}{2} \fnorm{\tV}^2 =
    \frac{1}{2} \fnorm{ \tau(\tu{}_0+\tv{}) - \tau(\tu{}_0)}^2 = 0.
\end{equation}
Thus, we can conclude that $\tau(\tu{}_0 + \tv{}) = \tau(\tu{}_0)$.
\end{proof}

We remark that the same results as in Theorem~\ref{thm:locmin} and
Proposition~\ref{locmin:orbit} hold for $n \geq r^2+1$ if vectors
$e_k$ in the definition of $\tT_0$ and $\tu{}_0$ are extended to be of
length $n$ with zero padding.  Furthermore, Theorem~\ref{thm:locmin}
and Proposition~\ref{locmin:orbit} also hold for a generalized version
of $\tT_0$ and the associated $\tu{}_0$, where $\tT_0$ is defined as
\begin{equation}
    \tT_0 = \sum_{\idxcont{k}{d} = 1}^r \left(
    \bigotimes_{i=1}^d\lambda^i_{\pi(k_{i+1},k_i)}
    g^i_{\pi(k_{i+1},k_i)} \right) +
    \bigotimes_{i=1}^d\lambda^i_{r^2+1} g^i_{r^2+1},
\end{equation}
with $\lambda^i_k>0$ and $\langle g^i_{k_1},g^i_{k_2}\rangle =
\delta_{k_1 k_2}$ for any $1 \leq k_1, k_2 \leq r^2+1$ and $1
\leq i \leq d$, and $\tu{}_0$ is given via replacing $e_{k}$ in the
definition of $\tu{i}$ in \eqref{eq:u0} by $\lambda^i_{k} g^i_{k}$ for
$1\leq i\leq d$ and $1\leq k\leq r^2$. This is due to the orthogonal
rotation invariant property of Frobenius norm and an observation
that a scaling will not break the proof of Theorem~\ref{thm:locmin}
and Proposition~\ref{locmin:orbit} as long as $\eta>0$ is small enough.

Another remark is on the difficulty of TR decomposition.
Theorem~\ref{thm:locmin} and Proposition~\ref{locmin:orbit} show that
TR decomposition is more difficult than tensor train decomposition,
since TT decomposition has one-loop convergence~\cite{Holtz2012,
Rohwedder2013} if the restricted bond dimension equals to the
underlying bond dimension of the target TT format whereas local minima
remain in TR decomposition even if the restricted bond dimension
increases exponentially as in the size of the ring. When $d$ is large,
we may not expect a good landscape of TR decomposition even if it is
very much over-parameterized.

Finally, we also want to point out that,
if $\tu{}+\tv{}\in\mathcal{M}_{\tu{}}$, then
$\tau(\tu{}+\tv{})=\tau(\tu{})$, while it is not clear whether the
reverse holds in general. This difficulty comes from the ring structure
of TR format. In fact, for TT format, it can be proved that the gauge
invariant and the orbit with the same whole tensors are equivalent if
the TT format is full-rank~\cite{Rohwedder2013}.

\section{Alternating least square algorithm for tensor ring decomposition}
\label{sec: ALS}

In this section, we recall the \emph{alternating least square} (ALS)
algorithm for computing the tensor ring decomposition~\cite{Khoo2017a,
Wang2017} of a given $d$-th order tensor. Some basic descriptions
and properties are in Section~\ref{sec: intro ALS}. ALS is a strictly
monotonically descent algorithm unless a stationary point is found. In
Section~\ref{sec: one-loop}, we establish the convergence analysis of
ALS when the bond dimension is sufficiently higher than that of the
target tensor. As will be shown, in such cases, ALS converges in one
outer iteration, which is known as the \emph{one-loop convergence}.
Recall that in Theorem~\ref{thm:locmin} we prove the existence
of spurious local minima in the over-parameterized case. The bond
dimension required for the one-loop convergence is larger than that
in Theorem~\ref{thm:locmin} (much larger than that of the true tensor).

\subsection{Algorithm}
\label{sec: intro ALS}

Note that the objective function \eqref{eq:trdobj} is not convex due
to the multilinear mapping $\tau$. Nevertheless, if we fix all but
one of the $3$rd order tensors in $\tu{}$, \textit{e.g.}, $\tu{i}$,
and consider the suboptimization problem with respect to $\tu{i}$:
\begin{equation}\label{eq:obj_fun_coord}
    \min_{\tu{i}} \frac{1}{2} \fnorm{ \tT - \tau (\tu{1}, \dots,
    \tu{i-1}, \tu{i}, \tu{i+1}, \dots, \tu{d} ) }^2,
\end{equation}
this gives a quadratic least square problem in $\tu{i}$ and hence can
be solved explicitly and efficiently.

In order to make the least square formulation more explicit, we first
define a sequence of matrices $\{B_i\}_{i=1}^d$ of unfolded $\tT$
and two unfolding operators $\alpha(\cdot)$ and $\gamma(\cdot)$. In
the followings, the function $\pi$ always denotes the lexicographical
order. Given $1 \leq i \leq d$, $B_i$ is an unfolding of $\tT$ as a
matrix of size $( \prod_{j \neq i} n_j) \times n_i$, \textit{i.e.},
\begin{equation} \label{eq:Bi}
    B_i(\pi(x_{i+1}, \dots, x_{i-1}), x_i) =
    B_i(\pi(x_{i+1},\dots,x_d, x_1, \dots, x_{i-1}), x_i) :=
    \tT(x_1,\dots,x_d),
\end{equation}
where the periodic index convention is used and $1 \leq x_j \leq n_j$
for $j=1,\dots,d$.  The unfolding operator $\alpha(\cdot)$ unfolds
a sequence of 3rd order tensors into a matrix as,
\begin{equation} \label{eq:defalpha}
    A_i = \alpha(\tu{i+1},\dots,\tu{i-1})\in\bbR^{( \prod_{j \neq i}
    n_j) \times m^2},
\end{equation}
with entries being
\begin{multline}
    A_i(\pi(x_{i+1},\dots,x_{i-1}),\pi(k_{i+1},k_{i})) \\
    = \sum_{k_{i+2},\dots,k_{i-1} = 1}^m
    \tu{i+1}(k_{i+1},x_{i+1},k_{i+2}) \cdots
    \tu{i-1}(k_{i-1},x_{i-1},k_i),
\end{multline}
where $m$ is the bond dimension, $1 \leq x_j \leq n_j$ for
$j=1,\dots,d$, and $1 \leq k_i,k_{i+1} \leq m$.  The unfolding
operator $\gamma(\cdot)$ unfolds a 3rd order tensor into a matrix
with compatible indices of $A_i$ and $B_i$, \textit{i.e.},
\begin{equation} \label{eq:defgamma}
    X_i = \gamma(\tu{i})\in\bbR^{m^2\times n_i}
\end{equation}
with entries being,
\begin{equation}
    X_i(\pi(k_{i+1},k_{i}),x_i) = \tu{i}(k_{i},x_{i},k_{i+1}),
\end{equation}
where indices are in the same ranges as before. Since $\gamma(\cdot)$
acts on a single 3rd tensor, the unfolding operator is invertible
and the invert operator $\gamma^{-1}(\cdot)$ will be used in the
later content.

With these unfolded matrices, \eqref{eq:obj_fun_coord} then can be
rewritten as a standard least square problem
\begin{equation} \label{eq:LSPmat}
    \min_{X} \frac{1}{2} \fnorm{ A_i X - B_i }^2,
\end{equation}
where $A_i$ and $B_i$ are defined as \eqref{eq:defalpha}
and \eqref{eq:Bi} respectively, and the minimizer of
\eqref{eq:obj_fun_coord} can be achieved from $\gamma^{-1}(X_i)$
for $X_i$ being the minimizer of \eqref{eq:LSPmat}.

To simplify the notation, we denote the objective function in
\eqref{eq:obj_fun_coord} as $f_{\tT}$.  A popular numerical approach
for solving \eqref{eq:trdobj} is to solve \eqref{eq:obj_fun_coord}
for each of $\tu{1}$ to $\tu{d}$ in a cyclic way. The corresponding
algorithm is known as the alternating least square (ALS) algorithm for
tensor ring decomposition.  The pseudo-code of ALS for TR decomposition
is presented in Algorithm~\ref{alg:TR_ALS}, with the subscript $\ell$
indicating the iteration number.

\begin{algorithm}
    \caption{ALS for TR decomposition} \label{alg:TR_ALS}
    \begin{algorithmic}[1]
        \REQUIRE{Target $d$-th order tensor $\tT$ and initial tensor
        ring $\tu{}_0$.}
        \ENSURE{Converged tensor ring $\tu{}$.}
        \FOR{$\ell=0,1,2,\cdots$}
            \FOR{$i=1,2,\cdots,d$}
                \STATE Perform an ALS microstep:
                    \begin{equation*}
                        \tu{i}_{\ell+1} = \argmin_{\mathbf{v}}
                        \frac{1}{2} \fnorm{ \tT - \tau(\tu{1}_{\ell+1},
                        \dots, \tu{i-1}_{\ell+1}, \mathbf{v},
                        \tu{i+1}_{\ell}, \dots, \tu{d}_{\ell} ) }^2
                    \end{equation*}
            \ENDFOR
        \ENDFOR
    \end{algorithmic}
\end{algorithm}

The next lemma states that the objective function of $\tu{}_{\ell}$ in
Algorithm~\ref{alg:TR_ALS} decreases monotonically until a stationary
point is achieved.

\begin{lemma} \label{lem:mondecrease}
    Let $\{\tu{}_\ell\}_{\ell=0}^\infty$ be a sequence of
    tensor ring generated by Algorithm~\ref{alg:TR_ALS}.
    For any $\ell$, if $\tu{}_\ell$ is not a stationary point
    of $f_{\tT}$, \textit{i.e.}, $\grad f_{\tT}(\tu{}_\ell) \neq 0$, then
    $f_{\tT}(\tu{}_{\ell+1})<f_{\tT}(\tu{}_\ell)$.
\end{lemma}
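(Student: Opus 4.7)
The plan is to exploit the exact-minimization property of each ALS microstep, track how the state evolves across the $d$ microsteps of one outer loop, and isolate a specific microstep that strictly decreases $f_{\tT}$. Introduce the intermediate states $\tu{}_{\ell,0} := \tu{}_\ell$ and $\tu{}_{\ell,i} := (\tu{1}_{\ell+1}, \ldots, \tu{i}_{\ell+1}, \tu{i+1}_\ell, \ldots, \tu{d}_\ell)$ for $i = 1, \ldots, d$, so that $\tu{}_{\ell,d} = \tu{}_{\ell+1}$. Since every microstep exactly solves the convex quadratic least-squares subproblem in \eqref{eq:LSPmat}, my first step is to establish the one-step monotonicity $f_{\tT}(\tu{}_{\ell,i}) \leq f_{\tT}(\tu{}_{\ell,i-1})$ together with a characterization of the equality case: equality holds if and only if $\gamma(\tu{i}_\ell)$ already satisfies the normal equation $A_i^\top A_i X = A_i^\top B_i$, equivalently $\partial_i f_{\tT}(\tu{}_{\ell,i-1}) = 0$.

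Using the hypothesis $\grad f_{\tT}(\tu{}_\ell) \neq 0$, I then pick the smallest index $i^*$ for which $\partial_{i^*} f_{\tT}(\tu{}_\ell) \neq 0$. For each $j < i^*$ we have $\partial_j f_{\tT}(\tu{}_\ell) = 0$, so $\tu{j}_\ell$ is already a minimizer of the $j$-th subproblem at the state $\tu{}_\ell$. Adopting the natural convention that the $\argmin$ in Algorithm~\ref{alg:TR_ALS} returns the current $\tu{j}_\ell$ whenever it is already optimal, the first $i^*-1$ microsteps leave the state untouched, giving $\tu{}_{\ell,i^*-1} = \tu{}_\ell$. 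The $i^*$-th microstep then starts from $\tu{}_\ell$; since $\partial_{i^*} f_{\tT}(\tu{}_\ell) \neq 0$, the current $\gamma(\tu{i^*}_\ell)$ fails the normal equation for the corresponding LSP, so this microstep is a strict descent step. The remaining microsteps are non-increasing, yielding $f_{\tT}(\tu{}_{\ell+1}) \leq f_{\tT}(\tu{}_{\ell,i^*}) < f_{\tT}(\tu{}_\ell)$.

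The main obstacle will be the potential non-uniqueness of the $\argmin$: when $A_i$ in \eqref{eq:LSPmat} is rank-deficient, the $i$-th LSP has a flat of minimizers, and an unrestricted choice of $\tu{i}_{\ell+1}$ could shift the state along the null space of $A_i$ even when $\tu{i}_\ell$ is already optimal, so that microstep $i^*$ need not start exactly at $\tu{}_\ell$. The tie-breaking convention above removes this ambiguity in a clean way; an alternative, more robust observation is that every LSP minimizer produces the same fit $A_i X_i$, so $\tau(\tu{}_{\ell,i}) = \tau(\tu{}_{\ell,i-1})$ whenever microstep $i$ preserves $f_{\tT}$, and hence the residual $\tT - \tau$ is invariant across the intermediate states, which is precisely the ingredient needed to transfer stationarity information back to $\tu{}_\ell$.
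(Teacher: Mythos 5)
Your argument is correct and is essentially the paper's own proof: locate the first microstep whose block gradient at $\tu{}_\ell$ is nonzero, show that microstep strictly decreases $f_{\tT}$, and note that every other microstep is non-increasing because it exactly solves its least-squares subproblem. You are in fact more careful than the paper about the degenerate case in which an earlier subproblem has a non-unique minimizer (the paper tacitly assumes the state is unchanged through the first $i^*-1$ microsteps); your tie-breaking convention resolves this cleanly, though the ``more robust'' alternative you sketch does not by itself close that gap, since $A_{i^*}$ depends on the individual blocks $\tu{j}$ and not only on $\tau(\tu{})$, so invariance of the residual does not immediately transfer the nonvanishing of the block gradient from $\tu{}_\ell$ to the intermediate state.
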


\begin{proof}
    Since $\grad f_{\tT}(\tu{}_\ell) \neq 0$, there exists a set of
    indices $\calJ$ such that $\grad_{\tu{j}} f_{\tT}(\tu{}_\ell)
    \neq 0$ for $j \in \calJ$. Let $i$ be the smallest index
    in $\calJ$.  The $i$-th microstep solves a least square
    problem with nonzero gradient. Hence the objective value
    strictly decreases. For all later $(i+1)$-th to $d$-th
    microsteps the objective value is non-increasing due to the
    nature of least square solutions. Therefore, we conclude that
    $f_{\tT}(\tu{}_{\ell+1})<f_{\tT}(\tu{}_\ell)$ if $\tu{}_\ell$
    is not a stationary point of $f_{\tT}$.
\end{proof}

We would hope to get a stronger result than Lemma~\ref{lem:mondecrease}
showing that ALS converges to a stationary point. However, while ALS
has the monotonic descent, the convergence to a stationary point is
still open due to the following difficulties.

First, the boundedness of $\{\tu{}_\ell\}_{\ell=0}^\infty$ can not be
ensured. Thus, it is hard to say that $\{\tu{}_\ell\}_{\ell=0}^\infty$
has an accumulation point. To bypass the unboundedness, we can
instead consider the accumulation point of the sequence of manifolds
$\calM_{\tu{}_\ell}$. But it is also not clear that whether there
exists a sequence of gauge $\{\vec{A}_\ell\}_{\ell=0}^\infty$ such that
$\left\{\theta_{\vec{A}_\ell}(\tu{}_\ell)\right\}_{\ell=0}^\infty$
is bounded. If we consider the sequence of the whole tensor, it can
be proved that $\{\tau(\tu{}_\ell)\}_{\ell=0}^\infty$ is bounded. An
accumulation point of $\{\tau(\tu{}_\ell)\}_{\ell=0}^\infty$ may not
be located in $\calR_{\vec{r},\vec{n}}^d$ though since it is known
that the set of tensors in TR format with a fixed bond dimension is
not closed~\cite{Landsberg2012} due to the underlying ring structure.

Second, even if $\{\tu{}_\ell\}_{\ell=0}^\infty$ or
$\left\{\theta_{\vec{A}_\ell}(\tu{}_\ell)\right\}_{\ell=0}^\infty$
has an accumulation point $\tu{}$, it is hard to say that the rank
of $\tu{}$ is equal to the rank of $\tu{}_\ell$ when $\ell$ is large
enough. If equality does not hold, some continuity properties do
not hold at $\tu{}$, which leads to difficulties when analyzing
the limiting behavior of $\{\tu{}_\ell\}_{\ell=0}^\infty$ or
$\left\{\theta_{\vec{A}_\ell}(\tu{}_\ell)\right\}_{\ell=0}^\infty$.

In fact, if the boundedness and the equality of rank are assumed,
convergence to stationary point of ALS can be ensured, similar
to the analysis in~\cite{Espig2015}.  It is an interesting future
research direction to establish these conditions for the tensor ring
decomposition.

\subsection{One-loop convergence} \label{sec: one-loop}

Even though the general convergence result without the assumptions
above is still open, we can prove one-loop convergence of the
Algorithm~\ref{alg:TR_ALS} in an extremely over-parameterized
case, which means that Algorithm~\ref{alg:TR_ALS} converges in $d$
microsteps. In this section, we will prove the one-loop convergence of
Algorithm~\ref{alg:TR_ALS} under mild assumptions on the target tensor.

Let us consider the case that the target tensor admits a tensor ring
decomposition as
\begin{equation} \label{eq:trueT-oneloop}
    \tT =\tau(\tw{})= \sum_{\idxcont{k}{d}=1}^r \tw{1}_{k_1, k_2}
    \otimes \tw{2}_{k_2, k_3} \otimes \cdots \otimes \tw{d}_{k_d, k_1}
    \in \calR_{r,\vec{n}}^d,
\end{equation}
with bond dimension $r$ and tensor ring components $\tw{} =
\left(\tw{1}, \tw{2}, \dots, \tw{d}\right) \in \ocalU_{r,\vec{n}}^d$.
From Algorithm~\ref{alg:TR_ALS}, we notice that if both the target
tensor and the initial tensor ring are multiplied by an orthogonal
matrix on an external dimension, all iterators of the algorithm remain
the same up to the orthogonal transformation on the corresponding
external dimension. Hence we claim Algorithm~\ref{alg:TR_ALS} is
invariant under orthogonal transformations on external dimensions
and so is the related analysis.

We assume that the external dimension $\vec{n}$ is large enough
with $n_i\geq r^2$ for $i=1,\dots,d$ and we consider the problem
$\min\frac{1}{2} \fnorm{ \tT - \tau( \tu{} ) }^2$ where $\tu{} \in
\ocalU_{m,\vec{n}}^d$ with bond dimension $m=r^{d-1}$. Since the
dimension of $\spanfun{\bigl\{ \tw{i}_{k_1, k_2}: 1 \leq k_1, k_2
\leq r \bigr\}}$ is bounded above by $r^2$ and the invariant property
of Algorithm~\ref{alg:TR_ALS}, without loss of generality, we assume
that $\tw{}$ is located in a small subspace of $\ocalU^d_{r,\vec{n}}$:
\begin{equation}
    \ocalW^d_{r,\vec{n}} := \left\{ \tw{} \in \ocalU^d_{r,\vec{n}} \mid
    \tw{i}_{k_1,k_2}(s)=0,\ \forall\ 1\leq i\leq d,\ 1\leq k_1,k_2\leq
    r,\ s\geq r^2+1\right\},
\end{equation}
which is because that orthogonal transformations do not change the
Frobenius norm and implies that $\tT(x_1,\dots,x_d)=0$ as long as
one of $x_1,\dots,x_d$ is greater than or equal to $r^2+1$. Since
Frobenius norm is invariant under the orthogonal rotations in each
outer dimension of the target tensor, $\ocalW^d_{r,\vec{n}}$ can be
generalized to $\ocalU^d_{r,\vec{n}}$ under orthogonal rotations. Our
convergence result is the following theorem, in which $\mu$ denotes the
proper Lebesgue measure. The theorem guarantees one-loop convergence
for typical target tensor and initial guess, when the bond dimension
is large enough.

\begin{theorem} \label{thm:one-loop}
    There exists $\Omega_1 \subseteq \ocalW^d_{r,\vec{n}}$
    with $\mu(\Omega_1)=0$, such that for any $\tw{} \in
    \ocalW^d_{r,\vec{n}} \setminus \Omega_1$ and $\tT = \tau(\tw{})$,
    there exists $\Omega_2\subseteq \ocalU^d_{m,\vec{n}}$ with
    $\mu(\Omega_2)=0$, such that Algorithm~\ref{alg:TR_ALS} converges
    to the global minimum in $d$ microsteps as long as the initial
    point $\tu{}_0 \not \in \Omega_2$.
\end{theorem}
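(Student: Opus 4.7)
The plan is to reduce one-loop convergence to a column-space containment and then verify it generically, either by direct algebraic tracking of the ALS updates or by an analyticity argument. Each microstep of Algorithm~\ref{alg:TR_ALS} solves a linear least-squares problem $\min_X \fnorm{A_i X - B_i}^2$ with $A_i$ and $B_i$ as in \eqref{eq:defalpha} and \eqref{eq:Bi}, whose optimum is zero exactly when $\operatorname{col}(B_i) \subseteq \operatorname{col}(A_i)$. By the optimality of each least-squares solve together with Lemma~\ref{lem:mondecrease}, once the residual reaches zero at any microstep it is preserved through the rest of the loop. Hence one-loop convergence reduces to showing that for generic $(\tw{},\tu{}_0)$ this containment holds at some microstep of the first loop; I would target the final microstep $i=d$.

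Because $\tT=\tau(\tw{})$ has bond $r$, the matricization $B_d$ factors as $B_d=C_d D_d$, where $C_d$ is the $\bigl(\prod_{j\neq d}n_j\bigr)\times r^2$ matrix whose $(k_1,k_d)$-th column collects the partial matrix products $[\tw{1}(x_1)\cdots\tw{d-1}(x_{d-1})]_{k_1,k_d}$ viewed as vectors in $(x_1,\ldots,x_{d-1})$, and $D_d$ encodes $\tw{d}$. Thus $\operatorname{col}(B_d)\subseteq\operatorname{col}(C_d)$, and the target becomes $\operatorname{col}(C_d)\subseteq\operatorname{col}(A_d)$. The matrix $A_d$ has the same combinatorial shape as $C_d$ but uses the updated iterates $\tu{1}_1,\ldots,\tu{d-1}_1$ at bond dimension $m=r^{d-1}$, so $A_d$ has $m^2=r^{2(d-1)}$ columns and, under generic non-degeneracy conditions, full column rank. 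The nontrivial step is that the specific column space produced by the ALS updates must envelope the $r^2$-dimensional $\operatorname{col}(C_d)$.

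To establish this, I would use a rational/analytic genericity argument. Set $R(\tu{}_0,\tw{})=\fnorm{\tT-\tau(\tu{}_1)}^2$ with $\tu{}_1$ the iterate after one outer loop of $d$ microsteps. On the open dense semi-algebraic set $V$ where every $A_i$ encountered during the loop has full column rank, $R$ is a rational (in fact real-analytic) function of $(\tu{}_0,\tw{})$, and the required $\Omega_1$, $\Omega_2$ can be built from the vanishing loci of the nonzero polynomials enforcing these rank conditions. To conclude $R\equiv 0$ on $V$, it suffices, by the identity theorem, to exhibit $R=0$ on some nonempty open subset. A natural route is a block embedding: take $\tw{}^*$ generic in $\ocalW^d_{r,\vec{n}}$ and let $\tu{}_0^*$ place $\tw{}^*$ in principal $r\times r$ blocks of the bond-$m$ tensors, with generic perturbations in the remaining blocks to enforce full rank of all $A_i$; then $\tT$ already lies in the range of the microstep-1 map, and by tracking how each least-squares solve respects the block decomposition one would verify $\tau(\tu{}_1)=\tT$ on an open neighborhood of $(\tu{}_0^*,\tw{}^*)$. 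Real analyticity of $R$ on $V$ then propagates the identity $R\equiv 0$ to the whole connected component of $V$ containing the witness.

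The main obstacle is precisely the witness step. The cyclic topology of TR couples every component through the ring, so unlike in the TT analysis of~\cite{Rohwedder2013} the microsteps do not decouple into independent updates: any block-structured initialization must be shown to yield, after every one of the $d$ microsteps, iterates that still parameterize $\tT$, and this requires careful bookkeeping through all the least-squares solves and across the cycle. A secondary complication is that $V$, being the complement of a real algebraic hypersurface, may have several connected components, so one must either argue connectedness or exhibit a witness in each component; if direct, a fallback is an inductive argument that after each microstep $\operatorname{col}(A_i)$ progressively captures more of the span encoded by $C_i$, with the bond dimension $m=r^{d-1}$ playing the critical role of being exactly large enough to "unroll" the ring into $d-1$ rank-$r$ matrix factors.
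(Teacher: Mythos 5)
Your reduction to the column-space containment $\operatorname{col}(B_d)\subseteq\operatorname{col}(A_d)$ at the last microstep, and your use of polynomial/rational non-degeneracy to define $\Omega_1$ and $\Omega_2$, both match the paper's framework. But the central step of your plan has a gap that I do not think can be repaired as stated. To invoke the identity theorem for the rational function $R(\tu{}_0,\tw{})$ you need $R\equiv 0$ on a nonempty \emph{open} subset of $V$; since $R\geq 0$, exhibiting a single witness with $R=0$ (e.g.\ a block embedding for which the first microstep already fits $\tT$ exactly) tells you nothing about a neighborhood, and verifying $\tau(\tu{}_1)=\tT$ for \emph{all} points in an open neighborhood of the witness is exactly the statement you are trying to prove, localized. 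You flag the witness step as the main obstacle, but the obstacle is not bookkeeping through the ring --- it is that the analytic-continuation strategy is circular unless you already have an open set of convergent initializations in hand.

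The paper closes this differently, and the missing ingredient is a deterministic subspace-invariance argument that makes analytic continuation of the residual unnecessary. Setting $\bbX_i=\operatorname{span}\{\tw{i}_{k_1,k_2}\}$ (of dimension at most $r^2$), each least-squares microstep produces $\tv{i}$ with all slices $\tv{i}_{k_1,k_2}\in\bbX_i$, because projecting $\gamma(\tx{i})$ onto $\bbX_i$ cannot increase the residual and the minimizer is unique under \eqref{eq:A.j}. Consequently every column of $A_d$ lies in $\bbX_1\otimes\cdots\otimes\bbX_{d-1}$, a space of dimension at most $r^{2(d-1)}=m^2$; full column rank of $A_d$ then forces $\operatorname{col}(A_d)$ to be \emph{all} of $\bbX_1\otimes\cdots\otimes\bbX_{d-1}$, which contains $\operatorname{col}(B_d)$ since $\tT\in\bbX_1\otimes\cdots\otimes\bbX_{d-1}\otimes\bbR^{n_d}$. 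This holds for every non-degenerate initialization, so the only thing requiring a genericity argument is the full-column-rank condition \eqref{eq:A.j} along the loop --- and for that a single explicit witness where a (multi-homogeneous) polynomial is nonzero suffices (the paper's Lemma~\ref{lem:exist_F} and Lemma~\ref{lem:Omega2}, whose witness is the structured construction \eqref{eq:u}, not a block embedding, and whose validity is what forces the assumptions \eqref{eq:B.j} defining $\Omega_1$). Without the invariance $\tv{i}_{k_1,k_2}\in\bbX_i$, your observation that $A_d$ generically has full column rank does not by itself locate $\operatorname{col}(C_d)$ inside $\operatorname{col}(A_d)$.
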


\begin{remark} \label{rmk:main_tech}
    In the proof of Theorem~\ref{thm:one-loop}, the technical
    part is to characterize two zero-measure sets $\Omega_1$
    and $\Omega_2$. Once these two sets are settled, the
    remaining proof is straightforward.  We postpone the proof of
    Theorem~\ref{thm:one-loop} towards the end of this section. In
    the following lemmas, we prove that a set is of zero measure
    through establishing the equivalence between this set and the set
    of roots of a polynomial, since the Lebesgue measure of the root
    set of a non-zero polynomial is zero.
\end{remark}

We first focus on the characterization $\Omega_2$ which
will be defined in Lemma~\ref{lem:exist_F}; $\Omega_1$ will
be characterized along the analysis and will be defined in
Lemma~\ref{lem:Omega1}. Both $\Omega_1$ and $\Omega_2$ are constructed
somewhat implicitly. Intuitively, $\Omega_2$ is the set of $\tu{}_0$
which leads to some degeneracy in the first $d$ microsteps of ALS. We
show that $\Omega_2$ is zero-measure following the argument sketched
in Remark~\ref{rmk:main_tech}, which would require some assumptions on
$\tT$. Then we denote $\Omega_1$ as the set of $\tw{}$ such that at
least one of those assumptions is violated and prove that $\Omega_1$
is also of zero-measure.

For simplicity, in the rest of this section we denote $\tu{}=\tu{}_0$
and $\tv{}=\tu{}_1$ as the initial tensor vector and the tensor
vector after one macro step (one loop).  Each micro step of
Algorithm~\ref{alg:TR_ALS} solves a least square problem as
\eqref{eq:LSPmat}. The full-column-rankness of $A_i$ leads to
the uniqueness of the solution, which is crucial for the one-loop
convergence. Thus, we lay down these natural assumptions for $j =
1,2,\dots,d$:
\begin{equation} \label{eq:A.j} \tag{A.j}
    A_j = \alpha(\tu{j+1},\dots,\tu{d},\tv{1},\dots,\tv{j-1}) \text{
    has full column rank}.
\end{equation}
Once \eqref{eq:A.j} is satisfied, \eqref{eq:LSPmat} has a unique
minimizer and $\tv{j}$ can be uniquely determined. 

All later proofs rely on a homogeneity property as defined in
Definition~\ref{def:multi-homo}.

\begin{definition} \label{def:multi-homo}
    A multi-variable function mapping from $d$ Euclidean spaces to
    an Euclidean space, \textit{i.e.},
    \begin{equation*}
        \begin{split}
            f : \ \bbR^{p_1} \times \bbR^{p_2} \times \dots
            \times \bbR^{p_d} \rightarrow & \quad\quad \bbR^{q}\\
            (x_1,x_2,\dots,x_d)\quad\ \ \mapsto& 
            f(x_1,x_2,\dots,x_d),\\
        \end{split}
    \end{equation*}        
    is \emph{\mhp{}} if 
    for any $\lambda\in\bbR$ and index $j\in\{1,2,\dots,d\}$,
    \begin{equation} \label{eq:multi-homo}
        f(x_1,\dots,x_{j-1},\lambda x_j, x_{j+1}, \dots, x_d) =
        \lambda^{s_j}
        f(x_1,\dots, x_d),
    \end{equation}
    where $s_j$ is a $j$-dependent non-negative integer, and each
    entry of $f(x_1, \dots, x_d)$ is a polynomial of entries of
    $x_1,x_2,\dots,x_d$.
\end{definition}

We emphasize that all $p_1, \dots, p_d$ and $q$ in
Definition~\ref{def:multi-homo} are multi-index notations,
\textit{e.g.}, $q = 2\times 3$. This means both the input $x_i$ and
the output $f(x_1, \dots, x_d)$ could be scalars, matrices, or tensors.

Next, we list two properties of \mhp{} function without detailed
proof. Concrete examples for both properties are given in
Appendix~\ref{app:mhp}.
\begin{itemize}
    \item (Productivity) The product of two \mhp{} functions is a \mhp{}.
        This product includes entry-wise product as well as compatible
        tensor contractions.
    \item (Composition) The composition of a \mhp{} function with a
        \mhp{} function is \mhp{}.
\end{itemize}

Obviously, unfolding operator $\alpha(\cdot)$ is a \mhp{} function.
In Lemma~\ref{lem:soln_multi_homo}, we show that under condition
\eqref{eq:A.j} and with proper scaling, the function mapping from
$(\tu{j+1},\dots,\tu{d}, \tv{1},\dots,\tv{j-1})$ to $\tv{j}$ can
be described by a \mhp{} function.  In Lemma~\ref{lem:exist_F}, we
characterize $\Omega_2$ and show the existence of a \mhp{} function
whose root set equals $\Omega_2$.

\begin{lemma} \label{lem:soln_multi_homo}
    There exists a \mhp{} function 
    \begin{equation*}
        G_j:\bigtimes_{i=j+1}^{j-1} \bbR^{m\times n_i \times
        m}\rightarrow \bbR^{m\times n_j \times m},
    \end{equation*}
    such that for any $(\tu{j+1},\dots,\tu{d}, \tv{1},\dots,\tv{j-1})$
    with \[A_j = \alpha(\tu{j+1},\dots,\tu{d}, \tv{1},\dots,\tv{j-1})\]
    satisfying condition \eqref{eq:A.j}, it holds that
    \begin{equation}
        \tv{j} = \frac{1}{\det{A_j^\top A_j}} G_j( \tu{j+1},\dots,\tu{d},
        \tv{1},\dots,\tv{j-1})
    \end{equation}
    for $\tv{j}$ being the solution of \eqref{eq:obj_fun_coord} at $j$-th
    microstep.
\end{lemma}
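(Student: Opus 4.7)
The plan is to combine the normal equations with Cramer's rule. Under assumption \eqref{eq:A.j}, the $j$-th microstep least squares problem \eqref{eq:LSPmat} has a unique minimizer
\[
    X_j \;=\; (A_j^\top A_j)^{-1} A_j^\top B_j,
\]
where $X_j = \gamma(\tv{j})$ and $B_j$ from \eqref{eq:Bi} is independent of the input arguments. Using the classical adjugate identity $(A_j^\top A_j)^{-1} = \frac{1}{\det(A_j^\top A_j)} \operatorname{adj}(A_j^\top A_j)$ together with the linearity of $\gamma^{-1}$, I would define
\[
    G_j(\tu{j+1},\ldots,\tu{d},\tv{1},\ldots,\tv{j-1}) \;:=\; \gamma^{-1}\bigl( \operatorname{adj}(A_j^\top A_j)\, A_j^\top B_j \bigr),
\]
so that the claimed identity $\tv{j} = \frac{1}{\det(A_j^\top A_j)} G_j(\tu{j+1},\ldots,\tv{j-1})$ holds by construction whenever \eqref{eq:A.j} is in force.

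The remaining work is to show that this $G_j$ is \mhp{} in the sense of Definition~\ref{def:multi-homo}, which I would do by tracking degrees through the pipeline. The unfolding $\alpha$ is manifestly \mhp{} of degree one in each of its $d-1$ arguments, so by the productivity property $A_j^\top A_j$ is \mhp{} of degree two in each argument, while $A_j^\top B_j$ is \mhp{} of degree one in each argument (since $B_j$ is a constant). Every entry of the $m^2 \times m^2$ adjugate is a signed minor of order $m^2-1$, hence a polynomial of total degree $m^2-1$ in the entries of $A_j^\top A_j$; invoking the composition property of the \mhp{} class then shows that $\operatorname{adj}(A_j^\top A_j)$ is \mhp{} of degree $2(m^2-1)$ in each argument. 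One more productivity step against $A_j^\top B_j$ and a final linear reshape by $\gamma^{-1}$ produce $G_j$ as a \mhp{} map of uniform degree $2m^2-1$ in each argument, matching the required scaling of $\tv{j}$ (degree $-1$ in each factor) once we divide by $\det(A_j^\top A_j)$, which is \mhp{} of degree $2m^2$.

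The main obstacle is not conceptual but bookkeeping: I must verify that each atomic operation in the pipeline (the reshapes $\alpha$, $\gamma$, $\gamma^{-1}$, matrix transpose, matrix-matrix products, and cofactor expansion) interacts correctly with the productivity and composition rules, so that the exponents $s_j$ in \eqref{eq:multi-homo} come out as non-negative integers uniformly across all output entries and the entrywise polynomial structure is preserved. Once these closure properties of the \mhp{} class are established, as collected in Appendix~\ref{app:mhp}, the conclusion follows by direct inspection of the composition chain described above.
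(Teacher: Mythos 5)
Your proposal is correct and follows essentially the same route as the paper: define $G_j = \gamma^{-1}\bigl(\operatorname{adj}(A_j^\top A_j)\,A_j^\top B_j\bigr)$, obtain the identity from the adjugate formula for the inverse in the normal equations, and conclude that $G_j$ is \mhp{} by combining the productivity and composition properties of the \mhp{} class with the homogeneity of the adjugate. The explicit degree bookkeeping you add (degree $2m^2-1$ for $G_j$ versus $2m^2$ for the determinant, yielding the expected degree $-1$ scaling of $\tv{j}$) is a consistent, if optional, sanity check beyond what the paper records.
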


\begin{proof} [Proof of Lemma~\ref{lem:soln_multi_homo}]
    As $\alpha(\cdot)$ and $\alpha^\top(\cdot)$ are \mhp{} functions,
    $A_j^\top A_j$ is a \mhp{} function, due to the productivity
    property of \mhp{} functions. According to the definition
    of adjugate operation, we have $\mathrm{adj}(\lambda A) =
    \lambda^{m^2-1} \mathrm{adj}(A),\ \forall\ \lambda\in\bbR,\
    A\in\bbR^{m^2\times m^2}$ and each entry of $\mathrm{adj}(A)$
    is a polynomial of entries of $A$. Hence, due to composition
    property, $\mathrm{adj}(A_j^\top A_j)$ is a \mhp{} function. The
    folding operator $\gamma^{-1}$ is also \mhp{}, \textit{i.e.},
    $\gamma^{-1}(\lambda X) = \lambda \gamma^{-1}(X)$ for any $\lambda$
    and $X$. Applying the productivity property and composition
    property of \mhp{} function again, we have that
    \begin{equation}
        G_j( \tu{j+1},\dots,\tu{d},\tv{1},\dots,\tv{j-1})
        = \gamma^{-1} (\mathrm{adj}(A_j^\top A_j) A_j^\top B_j)
    \end{equation}
    is a \mhp{} function.
    
    When $A_j$ has full column rank, \textit{i.e.}, $\det{A_j^\top
    A_j} \neq 0$, the unique minimizer of \eqref{eq:LSPmat} can be
    written as,
    \begin{equation}
        X_j = (A_j^\top A_j)^{-1} A_j^\top B_j = \frac{1}{\det{A_j^\top
        A_j}} \mathrm{adj}(A_j^\top A_j) A_j^\top B_j,
    \end{equation}
    where $\mathrm{adj}(\cdot)$ denotes the adjugate. Hence,
    \begin{equation}
        \tv{j} = \gamma^{-1}(X_j) = \frac{1}{\det{A_j^\top A_j}} G_j(
        \tu{j+1},\dots,\tu{d}, \tv{1},\dots,\tv{j-1})
    \end{equation}
    proves the lemma.
\end{proof}

\begin{lemma} \label{lem:exist_F}
    Denote $\Omega_2 = \bigl\{(\tu{1},\dots,\tu{d}) \mid \text{at least
    one of \eqref{eq:A.j} fails for } j=1,2,\dots,d \bigr\}$. There
    exists a \mhp{} function $F:\ocalU^d_{m,\vec{n}} \rightarrow \bbR$,
    such that $\Omega_2$ is the root set of $F$.
\end{lemma}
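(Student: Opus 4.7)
\textbf{Proof proposal for Lemma~\ref{lem:exist_F}.}

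The plan is to build $F$ recursively, keeping track of polynomial substitutes for the rational iterates $\tv{j}$. The obstacle with the ``natural'' candidate $\prod_{j=1}^d\det(A_j^\top A_j)$ is that for $j\geq 2$, $A_j$ depends on $\tv{1},\dots,\tv{j-1}$, which are merely rational in $(\tu{2},\dots,\tu{d})$ by Lemma~\ref{lem:soln_multi_homo}, so the product is not a polynomial. The main task is to clear these denominators while preserving the \mhp{} property.

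Inductively on $j$, I would construct \mhp{} functions $\hat{V}_j$ (tensor-valued) and $\hat{P}_j, Q_j$ (scalar) on $\ocalU^d_{m,\vec{n}}$ with the following properties: (i) when $(A.1),\dots,(A.j)$ all hold, $\tv{j}=\hat{V}_j/\hat{P}_j$; and (ii) when only $(A.1),\dots,(A.{j-1})$ hold, the identity $A_j=C_j/(\hat{P}_1\cdots\hat{P}_{j-1})$ holds with $C_j:=\alpha(\tu{j+1},\dots,\tu{d},\hat{V}_1,\dots,\hat{V}_{j-1})$ and $Q_j:=\det(C_j^\top C_j)=(\hat{P}_1\cdots\hat{P}_{j-1})^{2m^2}\det(A_j^\top A_j)$. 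The base case $j=1$ uses Lemma~\ref{lem:soln_multi_homo} directly, with $\hat{V}_1=G_1$ and $\hat{P}_1=Q_1=\det(A_1^\top A_1)$. The inductive step leverages the multi-linearity of $\alpha$ in each $\tv{k}$ slot to clear the denominators when $\tv{k}=\hat{V}_k/\hat{P}_k$ is substituted, and then sets $\hat{P}_j:=Q_j$ and $\hat{V}_j:=G_j(\tu{j+1},\dots,\tu{d},\hat{V}_1,\dots,\hat{V}_{j-1})\cdot\prod_{k<j}\hat{P}_k^{2m^2-d_{j,k}}$, where $d_{j,k}$ is the degree of $G_j$ in its $\tv{k}$ argument. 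The productivity and composition properties of \mhp{} functions ensure every object remains \mhp{} at each step.

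With this in hand, set $F:=\prod_{j=1}^d Q_j$, which is \mhp{} as a product of \mhp{} functions. To verify $\{F=0\}=\Omega_2$, I argue both directions via a minimal-index argument. If some $(A.j)$ fails, let $j^\ast$ be the smallest such; then $(A.1),\dots,(A.{j^\ast-1})$ hold, so $\hat{P}_1,\dots,\hat{P}_{j^\ast-1}$ are nonzero by induction on property (i), and property (ii) forces $Q_{j^\ast}=0$, hence $F=0$. Conversely, if $F=0$, pick the smallest $j^\ast$ with $Q_{j^\ast}=0$; a short induction using (ii) shows that $Q_1,\dots,Q_{j^\ast-1}\neq 0$ implies $(A.1),\dots,(A.{j^\ast-1})$ all hold, and then property (ii) for $j=j^\ast$ forces $\det(A_{j^\ast}^\top A_{j^\ast})=0$, so $(A.{j^\ast})$ fails.

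The main obstacle is ensuring that each $\hat{V}_j$ is a genuine polynomial (equivalently, that $2m^2-d_{j,k}\geq 0$), so the denominator-clearing does not introduce negative exponents. The explicit form $G_j=\gamma^{-1}(\mathrm{adj}(A_j^\top A_j)A_j^\top B_j)$ from the proof of Lemma~\ref{lem:soln_multi_homo} makes this transparent: since $A_j$ is linear in each $\tv{k}$, $\mathrm{adj}(A_j^\top A_j)$ has degree $2(m^2-1)$ in $\tv{k}$ and $G_j$ has degree $d_{j,k}=2m^2-1<2m^2$, so each exponent $2m^2-d_{j,k}=1$ is a strictly positive integer. Besides this degree bookkeeping, the rest of the argument is a mechanical application of the two closure properties of \mhp{} functions.
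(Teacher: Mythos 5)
Your proposal is correct, and it reaches the conclusion by a genuinely different inductive organization than the paper's. The paper also builds everything on Lemma~\ref{lem:soln_multi_homo} and the two closure properties of \mhp{} functions, but it runs the induction \emph{backwards}: it sets $F_d=\det(A_d^\top A_d)$, and at step $j$ forms $F_j=H_j^1\cdot H_j^2$ with $H_j^1=\det(A_j^\top A_j)$ and $H_j^2$ obtained by composing $G_j$ into the last slot of $F_{j+1}$ and clearing that single denominator $\det(A_j^\top A_j)^{s}$ via homogeneity; the final polynomial is the nested composition $F=F_1$, whose root set is shown to be $\Omega^{[1]}=\Omega_2$ by splitting into the cases where \eqref{eq:A.j} holds or fails. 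You instead run the induction \emph{forward}, maintaining explicit polynomial numerator/denominator pairs $(\hat V_j,\hat P_j)$ for the rational iterates and taking the product $F=\prod_{j}Q_j$ of denominator-cleared Gram determinants; your two-directional minimal-index argument plays the role of the paper's case split. The constructions yield different polynomials with the same root set. A modest advantage of your route is that the degree bookkeeping is explicit --- you verify $d_{j,k}=2m^2-1<2m^2$, so the clearing exponents are positive integers and each $\hat V_j$ is a genuine polynomial --- a point the paper glosses over by simply asserting that multiplication by $\det(A_j^\top A_j)^{s}$ produces a \mhp{} function. A modest advantage of the paper's route is that it never needs the numerators as separately named objects, since they are absorbed into the composition $F_{j+1}(\dots,G_j(\dots))$.
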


\begin{proof}[Proof of Lemma~\ref{lem:exist_F}]
    Lemma~\ref{lem:exist_F} is proven by induction from $d$ down
    to $1$.  We first define a sequence of set $\Omega^{[j]}$ for $j =
    1,2,\dots,d$ via
    \begin{equation*}
        \Omega^{[j]} = \{(\tu{j},\dots,\tu{d},\tv{1},\dots,\tv{j-1})
        \mid \text{at least one of \hyperref[eq:A.j]{(A.i)} fails for }
        i=j,\dots,d \}.
    \end{equation*}
    Notice that $\Omega_2 = \Omega^{[1]}$.

    First consider $\Omega^{[d]}$. If \hyperref[eq:A.j]{(A.d)} fails,
    \textit{i.e.}, $A_d = \alpha (\tv{1},\dots,\tv{d-1})$ does not have
    full column rank, then $\det{A_d^\top A_d} = 0$ defines a \mhp{}
    function
    \begin{equation}
        F_d(\tu{d},\tv{1},\dots,\tv{d-1}) = \det{A_d^\top A_d},
    \end{equation}
    with $\tu{d}$ being dummy variable, such that the root set of $F_d$
    equals $\Omega^{[d]}$.

    Now, we take induction step. Assume
    for $j+1$, there exists a \mhp{} function
    $F_{j+1}(\tu{j+1},\dots,\tu{d},\tv{1},\dots,\tv{j})$ such that the
    root set of $F_{j+1}$ equals $\Omega^{[j+1]}$. The construction
    of $F_j$ can be divided into two scenarios: \eqref{eq:A.j} fails
    and \eqref{eq:A.j} holds.

    When \eqref{eq:A.j} fails, \textit{i.e.}, $A_j = \alpha
    (\tu{j+1},\dots,\tu{d},\tv{1},\dots,\tv{j-1})$ does not have
    full column rank, then $\det{A_j^\top A_j} = 0$ defines a \mhp{}
    function
    \begin{equation}
        H_j^1(\tu{j},\dots,\tu{d},\tv{1},\dots,\tv{j-1})
        = \det{A_j^\top A_j},
    \end{equation}
    with $\tu{j}$ being dummy variable, such that the root set of
    $H_j^1$ equals $\Omega^{[j]}$ given \eqref{eq:A.j} fails.

    When \eqref{eq:A.j} holds, according to
    Lemma~\ref{lem:soln_multi_homo}, there exists a \mhp{} function
    $G_j$ such that
    \begin{equation} \label{eq:tvj}
        \tv{j} = \frac{1}{\det{A_j^\top A_j}} G_j ( \tu{j+1},\dots,\tu{d},
        \tv{1},\dots,\tv{j-1}).
    \end{equation}
    Since \eqref{eq:A.j} holds, at least one of
    \hyperref[eq:A.j]{(A.i)} for $i=j+1,\dots,d$ fails and, hence,
    $F_{j+1}$ exists. Substituting \eqref{eq:tvj} into $F_{j+1}$ and
    multiplied by $\det{A_j^\top A_j}^s$, where $s$ is the homogeneity
    degree of the last variable, gives a \mhp{} function
    \begin{multline}
        H_j^2(\tu{j},\dots,\tu{d},\tv{1},\dots,\tv{j-1}) \\
        =  F_{j+1}(\tu{j+1},\dots,\tu{d},\tv{1},\dots,\tv{j-1},G_j(
        \tu{j+1},\dots,\tu{d}, \tv{1},\dots,\tv{j-1})).
    \end{multline}
    The root set of $H_j^2$ equals $\Omega^{[j]}$ given \eqref{eq:A.j}
    holds.
    
    Combining two scenarios together, we define the \mhp{} function
    \begin{equation}
        F_j = H_j^1 \cdot H_j^2,
    \end{equation}
    with root set equals $\Omega^{[j]}$.
    
    Finally, setting $F=F_1$ completes the proof.
\end{proof}
  
Next we prove that the \mhp{} function in Lemma~\ref{lem:exist_F}
is not constantly zero, which is the second step of the strategy
described in Remark~\ref{rmk:main_tech}. We need some mild assumptions
on the target tensor $\tT$. Let $\tT^j \in \bbR^{r^{d-2} \times r^2
\times r^d}$ be a reshape of nonzeros of $\tT$ for $j=1,2,\cdots,d-1$
satisfying
\begin{multline}
    \tT^j(\pi(p_1, \dots, p_{j-1}, q_{j+1}, \dots, q_{d-1}),
    :, \pi(q_1, \dots, q_j, p_j, \dots, p_{d-1}))\\
    =  \tT( \pi(p_1, q_1), \dots, \pi(p_{d-1}, q_{d-1}), 1:r^2),
\end{multline}
where $1 \leq p_i, q_i \leq r$ and $1 \leq i \leq d-1$. The mild
assumptions state as
\begin{equation} \label{eq:B.j} \tag{B.j}
\begin{split}
    &\tT^j(:,:,\pi(q_1,\cdots,q_j,p_j,\cdots,p_{d-1})),\ 1 \leq q_1,
    \dots, q_j, p_j, \dots, p_{d-1} \leq r, \\
    &\text{ are linearly independent},
\end{split}
\end{equation}
for $j=1,2,\dots,d-1$. $\Omega_1$ is the set of tensors violating
these assumption. Later we will prove that $\Omega_1$ has zero measure
in Lemma~\ref{lem:Omega1}.

Given these assumptions, we can show that the \mhp{} function in
Lemma~\ref{lem:exist_F} is not constantly zero.

\begin{lemma} \label{lem:Omega2}
    Suppose $\tw{}\in\ocalW^d_{r,\vec{n}}$ and \eqref{eq:B.j} holds for
    $j=1,2,\dots,d-1$. The \mhp{} function $F$ in
    Lemma~\ref{lem:exist_F} is not constant zero and $\Omega_2$ has zero
    Lebesgue measure.
\end{lemma}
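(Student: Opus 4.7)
The high-level plan is to invoke the classical fact that the zero locus of a polynomial that is not identically zero has Lebesgue measure zero in its Euclidean ambient space. Since Lemma~\ref{lem:exist_F} identifies $\Omega_2$ as the root set of the \mhp{} function $F : \ocalU^d_{m,\vec{n}} \to \bbR$, it will be enough to exhibit a single point $\tu{*} \in \ocalU^d_{m,\vec{n}}$ at which $F(\tu{*}) \neq 0$; equivalently, a single initialization at which all $d$ conditions \eqref{eq:A.j} succeed as the first ALS loop runs.

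To produce such a point, I would build $\tu{*}$ from the target factors $\tw{}$ themselves, lifting from bond dimension $r$ to $m = r^{d-1}$ by an index-padding scheme compatible with the lexicographical order $\pi$, in the spirit of the construction $\tu{}_0$ in \eqref{eq:u0}. The goal of the construction is twofold: first, make the columns of $A_1 = \alpha(\tu{2*},\dots,\tu{d*})$ correspond, under a natural reshape, to the slices $\tT^1(:,:,\pi(q_1,p_1,\dots,p_{d-1}))$, so that their linear independence is exactly what hypothesis \eqref{eq:B.j} for $j=1$ asserts; second, arrange the construction so that solving the first microstep yields $\tv{1}$ equal (block-wise) to the corresponding lift of $\tw{1}$. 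Then by induction the subsequent matrices $A_j = \alpha(\tu{j+1*},\dots,\tu{d*},\tv{1},\dots,\tv{j-1})$ again have their columns matched with the slices appearing in hypothesis \eqref{eq:B.j}, which guarantees \eqref{eq:A.j} at every step.

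Once all \eqref{eq:A.j} hold at $\tu{*}$, each intermediate factor $\det A_j^\top A_j$ is nonzero, and therefore the \mhp{} function $F = F_1$ built inductively from $H_j^1 = \det A_j^\top A_j$ and $H_j^2$ in the proof of Lemma~\ref{lem:exist_F} evaluates to a nonzero number at $\tu{*}$. Hence $F$ is not constantly zero, and the standard fact that a nontrivial polynomial on a Euclidean space has zero-measure vanishing locus gives $\mu(\Omega_2) = 0$.

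The main obstacle is the bookkeeping of Step~2: one must choose $\tu{*}$ so that the ALS microsteps produce a tractable sequence $\tv{1}, \tv{2}, \dots$, and then identify the column space of each $A_j$ with the span of the linearly independent slices of $\tT^j$ appearing in \eqref{eq:B.j}. This requires carefully matching the $\pi$-indexed reshape used in defining $\alpha$ with the $\pi$-indexed reshape used in defining $\tT^j$, and verifying that no spurious cancellations arise from the contraction patterns. The hypotheses \eqref{eq:B.j} are tailored to make this identification possible, so once the correct $\tu{*}$ is written down the rank verification reduces to a translation between two reshapes of the same tensor.
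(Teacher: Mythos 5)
Your plan coincides with the paper's proof: the paper likewise exhibits a single witness point --- the indicator tensors of \eqref{eq:u0} for $\tu{1},\dots,\tu{d-1}$ with the slices of $\tT$ packed into $\tu{d}$, as in \eqref{eq:u} --- shows that each condition \eqref{eq:A.j} there reduces exactly to hypothesis \eqref{eq:B.j}, and then invokes the zero measure of the root set of a nonzero polynomial. The only simplification you leave implicit is that this witness satisfies $\tau(\tu{})=\tT$, so each microstep returns the current factor unchanged and your induction over $\tv{1},\dots,\tv{j-1}$ collapses to verifying \eqref{eq:A.j} at one fixed configuration.
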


\begin{proof} [Proof of Lemma~\ref{lem:Omega2}]
    First of all, $F$ is a polynomial of entries of
    $\tu{1},\dots,\tu{d}$. Then showing a polynomial is not constantly
    zero, it is sufficient to show that there exists a $\tu{}$ such that
    $F(\tu{}) \neq 0$ which is equivalent to show that for this $\tu{}$
    the condition \eqref{eq:A.j} holds for any $j=1,2,\cdots,d$.
    
    Let each 3-rd order tensor of $\tu{} = (\tu{1}, \tu{2}, \cdots,
    \tu{d}) \in \ocalU^d_{m,\vec{n}}$ be
    \begin{equation} \label{eq:u}
        \tu{i}_{\pi(\idxcont{p}{d-1}),\pi(\idxcont{q}{d-1})} =
        \delta_{p_1 q_1} \cdots \delta_{p_{i-1}q_{i-1}}
        \delta_{p_{i+1}q_{i+1}} \cdots \delta_{p_{d-1}q_{d-1}}
        e_{\pi(p_i,q_i)},
    \end{equation}
    for $1 \leq i \leq d-1$, and 
    \begin{equation}
        \tu{d}_{\pi(\idxcont{p}{d-1}),\pi(\idxcont{q}{d-1})} =
        \tT(\pi(q_1,p_1),\pi(q_2,p_2),\cdots,\pi(q_{d-1},p_{d-1}),:),
    \end{equation}
    where $1 \leq p_1,\dots,p_{d-1}, q_1,\dots,q_{d-1} \leq r$.
  
    Since $\tu{i}$ is defined identical to that in \eqref{eq:u0} for
    $1 \leq i \leq d-1$, then \eqref{eq:1to1-d} holds here as well.
    Hence, it is easy to verify that $\tau(\tu{}) = \tT$. Since $\tu{}$
    is already  a minimizer, we have $\tv{}=\tu{}$ if \eqref{eq:A.j}
    holds for all $j$, \textit{i.e.},  the minimizer in each microstep is
    unique. Thus, to prove that \eqref{eq:A.j} holds is equivalent to
    say that
    \begin{equation} \label{eq:tensorprodj}
        \sum_{k_{j+2},\dots,k_{j-1}=1}^m \tu{j+1}_{k_{j+1}, k_{j+2}}
        \otimes \cdots \otimes \tu{j-1}_{k_{j-1}, k_j},
    \end{equation}  
    $k_{j+1},k_j=1,2,\dots,m,$ are linearly independent, since
    each column of $A_j$ is an unfolding of \eqref{eq:tensorprodj}.
    Due to \eqref{eq:1to1-d}, \hyperref[eq:A.j]{(A.d)} holds directly.

    Consider a fixed $j\in\{1,2,\cdots,d-1\}$, and denote
    $k_j=\pi(p_1,p_2,\cdots,p_{d-1})$ and
    $k_{j+1}=\pi(q_1,q_2,\cdots,q_{d-1})$. With a careful index check,
    \eqref{eq:tensorprodj} equals
    \begin{equation}
        \begin{split}
            & \sum_{q'_{j+1},\dots,q'_{d-1},p'_1,\dots,p'_{j-1}=1}^r
            e_{\pi(q_{j+1},q'_{j+1})} \otimes \cdots \otimes
            e_{\pi(q_{d-1},q'_{d-1})} \\
            & \otimes \tT(\pi(p'_1,q_1),\dots,\pi(p'_{j-1},q_{j-1}),
            \pi(p_j,q_j),\pi(p_{j+1},q'_{j+1}),\dots,
            \pi(p_{d-1},q'_{d-1}),:) \\
            & \otimes e_{\pi(p'_1,p_1)} \otimes \cdots \otimes
            e_{\pi(p'_{j-1},p_{j-1})}.
        \end{split}
    \end{equation}
    Hence \eqref{eq:A.j} holds for $\tu{}$ is equivalent to
    \eqref{eq:B.j}. We have showed that $F(\tu{}) \neq 0$.

    Since $\Omega_2$ is the root set of $F$, which is a non-zero
    polynomial, the measure of $\Omega_2$ is zero.
  \end{proof}

If we merge the first and the second index of $\tT^j$ together,
then $\tT^j$ becomes an $r^d \times r^d$ matrix, denoted by
$\widetilde{\tT}^j$. Assumption \eqref{eq:B.j} is equivalent to say
that the matrix is full-rank. In random matrix theory, we know that the
measure of degenerate matrices is zero. Here, Lemma~\ref{lem:Omega1}
points out that, similar conclusion holds for $\tT^j$, if $\tw{1},
\dots, \tw{d}$ are generated randomly.

\begin{lemma} \label{lem:Omega1}
    Let $\Omega_1$ be the set of failure of \eqref{eq:B.j}, \textit{i.e.},
    \begin{equation*}
	    \Omega_1 = \{ \tw{} \in \ocalW^d_{r,\vec{n}} \mid \text{at least
	    one of \eqref{eq:B.j} is not satisfied for } j = 1, 2,
	    \dots, d-1 \},
    \end{equation*}
    Then $\Omega_1$ has Lebesgue measure 0.
\end{lemma}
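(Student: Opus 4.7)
The plan is to show, for each $j \in \{1, \dots, d-1\}$, that the failure set $\Omega_1^{(j)} := \{\tw{} \in \ocalW^d_{r, \vec{n}} : \text{(B.j) fails}\}$ is the zero locus of a nonzero polynomial on $\ocalW^d_{r, \vec{n}}$, hence has Lebesgue measure zero. Since $\Omega_1 = \bigcup_{j=1}^{d-1} \Omega_1^{(j)}$ is a finite union, the conclusion $\mu(\Omega_1) = 0$ will then follow.

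First I reformulate (B.j) as the non-singularity of the $r^d \times r^d$ matrix $\widetilde{\tT}^j$ obtained by merging the first two modes of $\tT^j$. Because $\tT = \tau(\tw{})$ depends polynomially on $\tw{}$, the determinant $P_j(\tw{}) := \det \widetilde{\tT}^j$ is a polynomial on $\ocalW^d_{r, \vec{n}}$, and it suffices to exhibit one $\tw{}$ at which $P_j$ does not vanish. Writing $W^{(i)}_{ab} := \tw{i}(\pi(a,b))$ and $V_{x_d} := \tw{d}(x_d)$ as $r \times r$ matrices, each entry of $\widetilde{\tT}^j$ has the form $\tr(W^{(1)}_{p_1 q_1} \cdots W^{(d-1)}_{p_{d-1} q_{d-1}} V_{x_d})$. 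Choosing $V_1, \dots, V_{r^2}$ to be the standard basis of $\bbR^{r \times r}$ factors $\widetilde{\tT}^j = (I_{r^{d-2}} \otimes \bar V)\bar N^j$, where $\bar V$ is an invertible $r^2 \times r^2$ change-of-basis matrix and $\bar N^j$ is the $r^d \times r^d$ matrix whose entries are the $(u,v)$-entries of $M^{(d-1)}_{\vec{p}, \vec{q}} := W^{(1)}_{p_1 q_1} \cdots W^{(d-1)}_{p_{d-1} q_{d-1}}$.

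The key specialization is $W^{(j)}_{ab} = e_a e_b^\top$. This rank-one choice splits $M^{(d-1)}_{\vec{p}, \vec{q}}$ into the outer product of a column depending only on $W^{(1)}, \dots, W^{(j-1)}$ and a row depending only on $W^{(j+1)}, \dots, W^{(d-1)}$; regrouping the row and column indices of $\bar N^j$ accordingly identifies $\bar N^j$ with a Kronecker product $L_{\mathrm{mat}} \otimes R_{\mathrm{mat}}$ with $L_{\mathrm{mat}} \in \bbR^{r^j \times r^j}$ and $R_{\mathrm{mat}} \in \bbR^{r^{d-j} \times r^{d-j}}$. Consequently $\det \bar N^j = (\det L_{\mathrm{mat}})^{r^{d-j}}(\det R_{\mathrm{mat}})^{r^j}$, so it remains to make both factors non-singular.

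I would prove non-singularity of $L_{\mathrm{mat}}$ (and symmetrically $R_{\mathrm{mat}}$) by induction on the number of cores contracted, starting from the identity matrix at zero cores. At the inductive step, the matrix obtained after $k+1$ cores factors as a composition $\Psi \Phi$ of two $r^{k+1} \times r^{k+1}$ matrices, where $\Phi$ is block-diagonal with blocks given by the $r^2 \times r^2$ reshape of the newly added core (invertible for a generic choice) and $\Psi$ is block-diagonal with blocks equal to the matrix from $k$ cores (invertible by the inductive hypothesis); their product is therefore invertible. The main obstacle I anticipate is verifying this block-diagonal factorization carefully: the way $\bar N^j$ splits into row and column indices is non-contiguous and the index bookkeeping is delicate. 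Once the factorization is in place, the induction closes, each $P_j$ is a nonzero polynomial, and the lemma follows.
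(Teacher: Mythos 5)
Your top-level skeleton is exactly the paper's: for each $j$ reduce \eqref{eq:B.j} to the non-singularity of the square matrix $\widetilde{\tT}^j$, observe that its determinant is a polynomial in the entries of $\tw{}$, exhibit one point of $\ocalW^d_{r,\vec{n}}$ where it does not vanish, and invoke the fact that the root set of a nonzero polynomial has measure zero together with the finiteness of the union over $j$. (The paper works with $\det\bigl(\widetilde{\tT}^{j\top}\widetilde{\tT}^j\bigr)$ rather than $\det\widetilde{\tT}^j$, an immaterial difference for a square matrix.) Where you diverge is in how the witness point is produced, and this is also where your argument is not yet complete: you choose the slices of $\tw{d}$ to be a basis of $\bbR^{r\times r}$ and make only $\tw{j}$ rank-one, which forces you into the Kronecker factorization $\bar N^j \cong L_{\mathrm{mat}}\otimes R_{\mathrm{mat}}$ and then an induction on the number of contracted cores whose key block-diagonal factorization you explicitly leave unverified. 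That induction does close --- the matrix after $k+1$ cores is, up to index permutations, $(L^{(k)}\otimes I_r)(I_{r^k}\otimes \Phi)$ with $\Phi$ the $r^2\times r^2$ reshape of the new core, so its determinant is $(\det L^{(k)})^{r}(\det\Phi)^{r^k}$ --- but you would still need to instantiate the ``generic'' cores with concrete invertible reshapes to actually produce a point, since genericity is what you are trying to prove. The paper short-circuits all of this by making \emph{every} core rank-one: taking $\tw{i}_{k_1,k_2}=e_{\pi(k_2,k_1)}$ for all $i$ (in your notation $W^{(i)}_{ab}=e_b e_a^{\top}$, and for $\tw{d}$ this is simultaneously a standard basis of $\bbR^{r\times r}$, so it is compatible with your first reduction). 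A two-line index check then shows $\widetilde{\tT}^j$ is a permutation (identity) matrix for every $j$ at once, with no factorization or induction needed. I recommend you replace your generic-core induction with this single explicit evaluation; as written, your proof has one genuinely unfinished step, even though the route you sketch can be completed.
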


\begin{proof} [Proof of Lemma~\ref{lem:Omega1}]
    Since $d$ is a finite number, it is sufficient to prove that for
    any $j = 1, 2, \dots, d-1$,
    \begin{equation} \label{eq:muCj}
        \mu(\{ \tw{} \mid \text{\eqref{eq:B.j} is not satisfied} \}) = 0.
    \end{equation}
    Consider a fixed $j \in \{1, 2, \dots, d-1 \}$. For any matrix
    $X \in \bbR^{r^d \times r^d}$, $X$ is column-rank-deficient,
    if and only if $\det{X^T X} = 0$. Similar to the proof
    of Lemma~\ref{lem:exist_F}, it can be shown that $f_j =
    \det{\left(\widetilde{\tT}^j\right)^T \widetilde{\tT}^j}$ is
    a \mhp{} polynomial. The condition in Lemma~\ref{lem:Omega1}
    can be rewritten in terms of \mhp{} polynomial, \textit{i.e.},
    the set in \eqref{eq:muCj} can be restated as,
    \begin{equation}
        \{ \tw{} \mid \text{\eqref{eq:B.j} is not satisfied} \} = \{ \tw{}
        \mid f_j(\tw{}) = 0 \}.
    \end{equation}
    If $f_j$ is not a zero
    polynomial, then the set of its roots has Lebesgue measure zero.
    
    The rest of the proof states that $f_j$ is not a zero polynomial.
    Consider the point
    \begin{equation}
        \tw{i}_{k_1, k_2} = e_{\pi(k_2,k_1)}, \quad 1 \leq k_1,
        k_2 \leq r, 1\leq i \leq d,
    \end{equation}
    which results $\tT = \sum_{\idxcont{k}{d} = 1}^r \bigotimes_{i=1}^d
    e_{\pi(k_{i+1}, k_i)}$ and the reshaped tensor,
    \begin{multline}
        \tT^j(\pi(p_1,\dots,p_{j-1},q_{j+1},\dots,q_{d-1}),
        :,\pi(q_1,\dots,q_j,p_j,\dots,p_{d-1}))\\
        = \delta_{p_1 q_2} \cdots \delta_{p_{j-1}q_j}
        \delta_{q_{j+1}p_j} \cdots \delta_{q_{d-1}p_{d-2}}
        e_{\pi(q_1,p_{d-1})}\in\bbR^{r^2},
    \end{multline}
    whose corresponding $r^d\times r^d$ matrix $\widetilde{\tT}^j$
    is an identity matrix. Since \eqref{eq:B.j} is satisfied for this
    specific $\tw{}$, $f_j$ is not a zero polynomial.
\end{proof}

Finally, with above technical lemmas, we now prove
Theorem~\ref{thm:one-loop}.

\begin{proof} [Proof of Theorem~\ref{thm:one-loop}]
    Let $\Omega_1$ and $\Omega_2$ be the measure-zero sets as defined in
    Lemma~\ref{lem:Omega1} and Lemma~\ref{lem:exist_F} respectively. We
    consider points $\tw{}\notin\Omega_1$ and $\tu{}\notin\Omega_2$. Set
    \begin{equation*}
      \bbX_i:=\text{span}\left\{ \tw{i}_{k_1, k_2}:
        k_1,k_2=1,2,\cdots,r\right\}
    \end{equation*}
    and denote $\bbP_i$ as the orthogonal projector onto $\bbX_i$ for
    $i=1,2,\cdots,d$.
    
    According to \eqref{eq:LSPmat}, we have
    \begin{multline*}
        \frac{1}{2} \fnorm{ \tT - \tau( \tx{1}, \tu{2}, \dots,
        \tu{d} )}^2 
        \geq \frac{1}{2} \fnorm{ \tT - \tau(
        \gamma^{-1}(\bbP_1(\gamma(\tx{1}))), \tu{2}, \dots, \tu{d})}^2,
    \end{multline*}
    for any $\tx{1}$, which implies that $\tv{1}_{k_1, k_2} \in \bbX_1,\
    \forall \ k_1,k_2=1,2,\dots,m$. Similarly, we have
    \begin{equation*}
        \tv{i}_{k_1, k_2} \in \bbX_i, \quad \forall\ k_1,k_2=1,2,\dots,m,
    \end{equation*}
    for $i = 1, 2, \dots, d-1$.

    Now we consider the last microstep
    \begin{equation}
        \tv{d} = \argmin_{\tx{d}} \frac{1}{2} \fnorm{ \tT - \tau(
        \tv{1}, \dots, \tv{d-1}, \tx{d} )}^2.
    \end{equation}
    The condition \hyperref[eq:A.j]{(A.d)} implies that 
    \begin{equation*}
        \sum_{k_2,\cdots,k_{d-1}=1}^m \tv{1}_{k_1, k_2} \otimes \cdots
        \otimes \tv{d-1}_{k_{d-1}, k_d},\quad  k_1,k_d=1,\dots,m,
    \end{equation*}
    are linearly independent. Combining the linear independence with
    \begin{equation}
        m^2 = r^{2(d-1)} \geq \prod_{k=1}^{d-1}\mathrm{dim}(\bbX_k) =
        \mathrm{dim}(\bbX_1 \otimes \cdots \otimes \bbX_{d-1}),
    \end{equation}
    and
    \begin{equation}
        \sum_{k_2,\cdots,k_{d-1}=1}^m \tv{1}_{k_1, k_2} \otimes \cdots
        \otimes \tv{d-1}_{k_{d-1}, k_d} \in \bbX_1 \otimes \cdots
        \otimes \bbX_{d-1}, \quad \forall\ k_1,k_d=1,\dots,m,
    \end{equation}
    we obtain that
    \begin{equation}
        \mathrm{span}\left\{ \sum_{k_2,\dots,k_{d-1}=1}^m \tv{1}_{k_1,
        k_2}\otimes \cdots \otimes \tv{d-1}_{k_{d-1}, k_d},\ 
        k_1,k_d=1,\dots,m \right\} = \bbX_1 \otimes \cdots \otimes
        \bbX_{d-1}.
    \end{equation}
    Noticing that $\tT \in \bbX_1 \otimes \cdots \otimes \bbX_{d-1}
    \otimes \bbR^{n_d}$, there exists $\tv{d}$ such that
    \begin{equation}
        \tau(\tv{1},\dots,\tv{d-1},\tv{d}) = \tT,
    \end{equation}
    which proves the theorem.
\end{proof}

\section{Numerical results}
\label{sec: numerical}

In this section, we present two sets of numerical results
to validate and further support Theorem~\ref{thm:locmin} and
Theorem~\ref{thm:one-loop} respectively.  For Theorem~\ref{thm:locmin},
in Section~\ref{sec: spurious local minima}, we identify
a non-strict spurious local minimum for a target tensor of
bond dimension $r+1$ as in \eqref{eq:T0} given the optimization
problem in $\ocalU_{r^{d-1},n}^d$. In this section, we numerically
validate that ALS algorithm in some sense can not escape from
the spurious local minimum in Theorem~\ref{thm:locmin} though
Proposition~\ref{locmin:orbit} suggests that this local minimum
might not be strict.  Whereas for Theorem~\ref{thm:one-loop},
in Section~\ref{sec: ALS}, the one-loop convergence is proven for
target tensors with bond dimension $r$ given the optimization problem
in $\ocalU_{r^{d-1},n}^d$ solved via ALS. We numerically test the
tightness of the bond dimension $r^{d-1}$ and show that the one-loop
convergence does not hold when the tensor ring space is reduced to
$\ocalU_{r^{d-1}-1,n}^d$.  All numerical results in this section are
generated from codes implemented and executed with MATLAB.

\subsection{The stability of the spurious local minimum}
\label{sec: stable local min}

Theorem~\ref{thm:locmin} shows that for the given target tensor
as \eqref{eq:T0}, there exists a carefully designed local minimum.
Due to the intrinsic difficulty of tensor ring format, the theorem does
not characterize the neighborhood of all tensor rings with equivalent
format.  Hence we numerically demonstrate that the designed local
minimum is somewhat a numerically inescapable local minimum for ALS.

We construct the target tensor as \eqref{eq:T0} with $d = 3$,
$r = 3$, and $n = r^2+1 = 10$ and the local minimum $\tu{}_0$
as \eqref{eq:u0} is constructed accordingly. For the purpose of
this section, we apply ALS with a initial tensor ring being a
perturbation of $\tu{}_0$.  The perturbation is added as follows.
Given a perturbation size $c \geq 0$, we add independent random
numbers, uniformly distributed on $[-c,c]$, on each entry of
$\tu{}_0$. We select 1000 choices of $c$ between $0$ and $0.3$. For
each $c$, $10^5$ perturbations are tested via ALS and the least-square
problems therein are solved via MATLAB backslash. When the objective
functions of a converged iteration stay above $\frac{1}{2}$, we
claim it is trapped by the local minimum. Otherwise, it escapes
from the local minimum. Figure~\ref{fig:trap_escape} shows two
typical convergence behaviors of a ``trapping'' and an ``escaping''
iteration. Figure~\ref{fig:locminphase} plots the phase transition
of the empirical probability that ALS is trapped at a TR format which
leads to the same whole tensor as the local minimum $\tu{}_0$.

\begin{figure}[ht]
	\centering
    \includegraphics[height=0.5\textwidth]{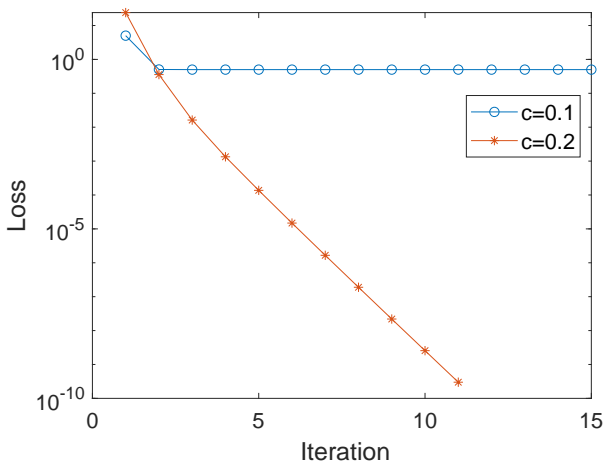}
    \caption{Typical convergence behavior of a ``trapping''
    and an ``escaping'' iteration with $c$ being $0.1$ and $0.2$
    respectively.} \label{fig:trap_escape}
\end{figure}

\begin{figure}[ht]
	\centering
    \includegraphics[height=0.5\textwidth]{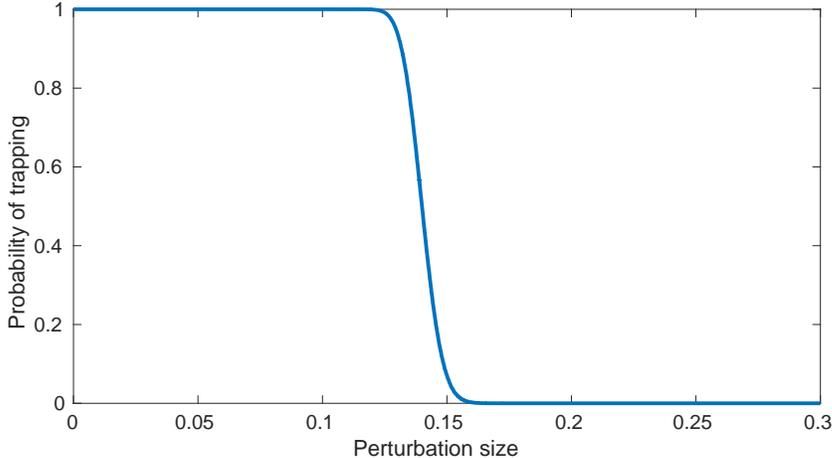}
    \caption{The stability of the spurious local minimum. The
    probability of trapping at $\vec{\mathbf{u}}_0$ via ALS against
    the perturbation size $c$.} \label{fig:locminphase}
\end{figure}

The curve in Figure~\ref{fig:locminphase} decreases monotonically,
which indicates that ALS is more likely to escape from the set
$\{\tu{}:\tau(\tu{})=\tau(\tu{}_0)\}$ if it starts at a point further
away from $\tu{}_0$. While, if the perturbation size is small enough,
\textit{i.e.}, ALS starts at a point close to local minimum $\tu{}_0$,
then ALS can not escape empirically. In Figure~\ref{fig:trap_escape},
we plot two curves of loss corresponding to $c=0.1$ and $c=0.2$
respectively. We can see that ALS traps when $c=0.1$ and escape when
$c=0.2$. This provides numerical evidence that the local minimum
$\tu{}_0$ is somewhat stable, \textit{i.e.}, $\tu{}_0$ is a interior
point of the basin of attraction of the ALS and the basin has
positive measure.

\subsection{One-loop convergence}

In this section, we numerically show that Theorem~\ref{thm:one-loop}
holds in practice and the bond dimension given in the theorem is
tight, \textit{i.e.}, one-loop convergence fails if $m=r^{d-1}-1$. We
present the results for different choices of $r\geq 3$, $d\geq 3$, and
$n\geq r^2$.  Two choices of bond dimensions, $m = r^{d-1}$ and $m =
r^{d-1}-1$, are tested. For a given $r$, $d$, $n$, and $m$, we randomly
generate an initial tensor ring with each entry being a standard
Gaussian random, and then perform ALS for $d$ microsteps. The final
function value $f(\tu{}_1)$ is reported as the result. Such experiment
is repeated for 100 times for every given $r$, $d$, $n$, and the
statistics of $f(\tu{}_1)$s are reported in Table~\ref{tab:one-loop}.

\begin{table}[htb]
    \centering
    \caption{One-loop convergence}
    \label{tab:one-loop}
    \begin{tabular}{ccccll}
        \toprule
        $d$ & $r$ & $n$ & $m$
        & $\max f(\tu{}_1)$ & $\min f(\tu{}_1)$ \\
        \toprule
        \multirow{2}*{3} & \multirow{2}*{3} & \multirow{2}*{10}
        & $9$ & $1.79 \times 10^{-13}$ & $1.01 \times 10^{-22}$ \\
        \cmidrule{4-6}
        & & & $8$ & $7.37 \times 10^{1}$ & $5.29 \times 10^{0}$ \\
        \midrule
        \multirow{2}*{4} & \multirow{2}*{3} & \multirow{2}*{10}
        & $27$ & $4.45 \times 10^{-9}$ & $6.69 \times 10^{-19}$ \\
        \cmidrule{4-6}
        & & & $26$ & $1.06 \times 10^{2}$ & $2.44 \times 10^{0}$ \\
        \midrule
        \multirow{2}*{3} & \multirow{2}*{4} & \multirow{2}*{16}
        & $16$ & $8.58 \times 10^{-5}$ & $5.10 \times 10^{-17}$ \\
        \cmidrule{4-6}
        & & & $15$ & $5.61 \times 10^{1}$ & $1.63 \times 10^{0}$ \\
        \bottomrule
    \end{tabular}
    
\end{table}

Table~\ref{tab:one-loop} shows that $\tu{}_1$ definitely does not
converge in one loop for bond dimension $m = r^{d-1} - 1$, since
$f(\tu{}_1)$ is far away from zero. Hence Theorem~\ref{thm:one-loop}
is numerically verified and so is its tightness. We notice that the
$\max f(\tu{}_1)$ is not close to machine accuracy after one loop
when $m = r^{d-1}$, which is due to the initial random tensor ring is
ill-conditioned and the numerical inverse is significantly polluted
by the numerical error.

\section{Conclusion}
\label{sec: conclusion}

In this paper, we investigate a sharp transition for the optimization
landscape associated with the tensor ring decomposition. Consider least
square fitting of a target tensor $\tT$ by $d$-th order TR format with
bond dimension $r^{d-1}$, or equivalently solving the optimization
problem \eqref{eq:trdobj} with bond dimension $r^{d-1}$, if $\tT$
is in TR format with bond dimension $r$, the problem is trivial,
\textit{i.e.}, one-loop convergence holds. However, if $\tT$ is in TR
format with bond dimension $r+1$, the landscape of \eqref{eq:trdobj}
might be quite bad, \textit{i.e.}, there may exist some spurious local
minima. These results tell us that even in the over-parameterized
case, we may not expect a good optimization landscape of tensor ring
decomposition, which in some sense shows the difficulty in numerical
algorithms for tensor ring decomposition.

\section*{Acknowledgments}

This work is partially supported by the National Science Foundation
under awards OAC-1450280 and DMS-1454939. Ziang Chen is partially
supported by the elite undergraduate training program of School of
Mathematical Sciences in Peking University.

\bibliographystyle{siam}
\bibliography{references}

\appendix

\newpage

\section{Visualization of the target tensor and the local minimum
in Theorem~\ref{thm:locmin}}
\label{visualize}

In the case that $d=3$ and $r=2$, the target tensor
$\tT_0\in\bbR^{5\times 5\times 5}$ is explicitly given by
\begin{equation*}
\begin{split}
    &
    \tT_0(:,:,1)=\begin{pmatrix}
    1 & 0 & 0 & 0 & 0\\
    0 & 0 & 0 & 0 & 0\\
    0 & 1 & 0 & 0 & 0\\
    0 & 0 & 0 & 0 & 0\\
    0 & 0 & 0 & 0 & 0
    \end{pmatrix}, \qquad\qquad
    \tT_0(:,:,2)=\begin{pmatrix}
    0 & 0 & 1 & 0 & 0\\
    0 & 0 & 0 & 0 & 0\\
    0 & 0 & 0 & 1 & 0\\
    0 & 0 & 0 & 0 & 0\\
    0 & 0 & 0 & 0 & 0
    \end{pmatrix}, \\
    &
    \tT_0(:,:,3)=\begin{pmatrix}
    0 & 0 & 0 & 0 & 0\\
    1 & 0 & 0 & 0 & 0\\
    0 & 0 & 0 & 0 & 0\\
    0 & 1 & 0 & 0 & 0\\
    0 & 0 & 0 & 0 & 0
    \end{pmatrix}, \qquad \qquad
    \tT_0(:,:,4)=\begin{pmatrix}
    0 & 0 & 0 & 0 & 0\\
    0 & 0 & 1 & 0 & 0\\
    0 & 0 & 0 & 0 & 0\\
    0 & 0 & 0 & 1 & 0\\
    0 & 0 & 0 & 0 & 0
    \end{pmatrix}, \\
    &
    \tT_0(:,:,5)=\begin{pmatrix}
    0 & 0 & 0 & 0 & 0\\
    0 & 0 & 0 & 0 & 0\\
    0 & 0 & 0 & 0 & 0\\
    0 & 0 & 0 & 0 & 0\\
    0 & 0 & 0 & 0 & 1
    \end{pmatrix}.
\end{split}
\end{equation*}
We emphasize that the last columns and rows of the first four slices are
all zeros. In the last slice, $\tT_0(:,:,5)$ is all zero except the last
entry is one, which comes from the extra term in \eqref{eq:T0}.

The local minimum $\tu{}_0=(\tu{1},\tu{2},\tu{3})$ defined in
\eqref{eq:u0} is,
\begin{equation} \label{eq:u0-1}
    \tu{1}_{\pi(p_1,p_2),\pi(q_1,q_2)} =
    \begin{cases}
        e_{\pi(p_1,q_1)},&\text{if } p_2 = q_2 \\
        0,&\text{otherwise}
    \end{cases},
\end{equation}
\begin{equation} \label{eq:u0-2}
    \tu{2}_{\pi(p_1,p_2),\pi(q_1,q_2)} =
    \begin{cases}
        e_{\pi(p_2,q_2)},&\text{if } p_1 = q_1\\
        0,&\text{otherwise}
    \end{cases},
\end{equation}
and
\begin{equation} \label{eq:u0-3}
    \tu{3}_{\pi(p_1,p_2),\pi(q_1,q_2)} =
    \begin{cases}
        e_{\pi(p_1,q_2)},&\text{if } p_2 = q_1\\
        0,&\text{otherwise}
    \end{cases}.
\end{equation}
In the setting $r=2$, $\tu{1}$, $\tu{2}$, and $\tu{3}$ are all $4\times
5\times 4$-tensors with
\begin{equation*}
\begin{split}
    &\tu{1}(:,1,:)=\begin{pmatrix}
    1 & 0 & 0 & 0\\
    0 & 1 & 0 & 0\\
    0 & 0 & 0 & 0\\
    0 & 0 & 0 & 0
    \end{pmatrix},
    \qquad \qquad
    \tu{1}(:,2,:)=\begin{pmatrix}
    0 & 0 & 1 & 0\\
    0 & 0 & 0 & 1\\
    0 & 0 & 0 & 0\\
    0 & 0 & 0 & 0
    \end{pmatrix}, \\
    &\tu{1}(:,3,:)=\begin{pmatrix}
    0 & 0 & 0 & 0\\
    0 & 0 & 0 & 0\\
    1 & 0 & 0 & 0\\
    0 & 1 & 0 & 0
    \end{pmatrix},
    \qquad \qquad
    \tu{1}(:,4,:)=\begin{pmatrix}
    0 & 0 & 0 & 0\\
    0 & 0 & 0 & 0\\
    0 & 0 & 1 & 0\\
    0 & 0 & 0 & 1
    \end{pmatrix},\\
    &\tu{1}(:,5,:)=\begin{pmatrix}
    0 & 0 & 0 & 0\\
    0 & 0 & 0 & 0\\
    0 & 0 & 0 & 0\\
    0 & 0 & 0 & 0
    \end{pmatrix};
\end{split}
\end{equation*}
\begin{equation*}
\begin{split}
    &\tu{2}(:,1,:)=\begin{pmatrix}
    1 & 0 & 0 & 0\\
    0 & 0 & 0 & 0\\
    0 & 0 & 1 & 0\\
    0 & 0 & 0 & 0
    \end{pmatrix},
    \qquad \qquad
    \tu{2}(:,2,:)=\begin{pmatrix}
    0 & 1 & 0 & 0\\
    0 & 0 & 0 & 0\\
    0 & 0 & 0 & 1\\
    0 & 0 & 0 & 0
    \end{pmatrix},\\
    &\tu{2}(:,3,:)=\begin{pmatrix}
    0 & 0 & 0 & 0\\
    1 & 0 & 0 & 0\\
    0 & 0 & 0 & 0\\
    0 & 0 & 1 & 0
    \end{pmatrix},
    \qquad \qquad
    \tu{2}(:,4,:)=\begin{pmatrix}
    0 & 0 & 0 & 0\\
    0 & 1 & 0 & 0\\
    0 & 0 & 0 & 0\\
    0 & 0 & 0 & 1
    \end{pmatrix},\\
    &\tu{2}(:,5,:)=\begin{pmatrix}
    0 & 0 & 0 & 0\\
    0 & 0 & 0 & 0\\
    0 & 0 & 0 & 0\\
    0 & 0 & 0 & 0
    \end{pmatrix};
\end{split}
\end{equation*}
and
\begin{equation*}
\begin{split}
    &\tu{3}(:,1,:)=\begin{pmatrix}
    1 & 0 & 0 & 0\\
    0 & 0 & 1 & 0\\
    0 & 0 & 0 & 0\\
    0 & 0 & 0 & 0
    \end{pmatrix},
    \qquad \qquad
    \tu{3}(:,2,:)=\begin{pmatrix}
    0 & 1 & 0 & 0\\
    0 & 0 & 0 & 1\\
    0 & 0 & 0 & 0\\
    0 & 0 & 0 & 0
    \end{pmatrix},\\
    &\tu{3}(:,3,:)=\begin{pmatrix}
    0 & 0 & 0 & 0\\
    0 & 0 & 0 & 0\\
    1 & 0 & 0 & 0\\
    0 & 0 & 1 & 0
    \end{pmatrix},
    \qquad \qquad
    \tu{3}(:,4,:)=\begin{pmatrix}
    0 & 0 & 0 & 0\\
    0 & 0 & 0 & 0\\
    0 & 1 & 0 & 0\\
    0 & 0 & 0 & 1
    \end{pmatrix},\\
    &\tu{3}(:,5,:)=\begin{pmatrix}
    0 & 0 & 0 & 0\\
    0 & 0 & 0 & 0\\
    0 & 0 & 0 & 0\\
    0 & 0 & 0 & 0
    \end{pmatrix}.
\end{split}
\end{equation*}
The last slide for any $\tu{k}$ are empty.

The tensor $\tau(\tu{}_0)\in\bbR^{5\times 5\times 5}$ is then the
same as $\tT_0$ except the last slice, i.e., $\tau(\tu{}_0)(:,:,5)$
is an all zero matrix.

\section{Examples of \mhp{}}
\label{app:mhp}

We first give an example for the entry-wise productivity property.
Let $f(x_1,x_2)$ and $g(x_1,x_3)$ be two \mhp{} with output being in
$\bbR^{2 \times 1}$, \textit{i.e.},
\begin{equation}
    f(x_1,x_2) =
    \begin{pmatrix}
        f_1(x_1, x_2) \\
        f_2(x_1, x_2)
    \end{pmatrix}
    \text{ and }
    g(x_1,x_3) =
    \begin{pmatrix}
        g_1(x_1, x_3) \\
        g_2(x_1, x_3)
    \end{pmatrix}.
\end{equation}
Then the entry-wise product of $f$ and $g$, denoted as
\begin{equation}
    h(x_1, x_2, x_3) =
    \begin{pmatrix}
        f_1(x_1, x_2) g_1(x_1, x_3) \\
        f_2(x_1, x_2) g_2(x_1, x_3)
    \end{pmatrix},
\end{equation}
is still a entry-wise polynomial of entries of $x_1, x_2, x_3$.
The homogeneity can be justified as,
\begin{equation}
    \begin{split}
        h(\lambda x_1, x_2, x_3) = & \lambda^{s_1^f + s_1^g} h(x_1, x_2,
        x_3), \\
        h(x_1, \lambda x_2, x_3) = & \lambda^{s_2^f} h(x_1, x_2,
        x_3), \text{ and} \\
        h(x_1, x_2, \lambda x_3) = & \lambda^{s_3^g} h(x_1, x_2,
        x_3), \\
    \end{split}
\end{equation}
where $s_1^f$, $s_2^f$, $s_1^g$, and $s_3^g$ are the homogeneity
degree of $f$ and $g$ respectively.  Hence $h$ is also \mhp{}.

We then give an example for the tensor contraction (matrix product)
productivity property.  Let $f(x_1,x_2)$ and $g(x_1,x_3)$ be two \mhp{}
with output being in $\bbR^{2 \times 2}$, \textit{i.e.},
\begin{equation}
    \begin{split}
        f(x_1,x_2) = &
        \begin{pmatrix}
            f_{11}(x_1, x_2) & f_{12}(x_1, x_2) \\
            f_{21}(x_1, x_2) & f_{22}(x_1, x_2)
        \end{pmatrix}
        \text{ and } \\
        g(x_1,x_3) = &
        \begin{pmatrix}
            g_{11}(x_1, x_3) & g_{12}(x_1, x_3) \\
            g_{21}(x_1, x_3) & g_{22}(x_1, x_3)
        \end{pmatrix}.
    \end{split}
\end{equation}
The matrix product of $f$ and $g$, denoted as
\begin{equation}
    h(x_1, x_2, x_3) =
    \begin{pmatrix}
        f_{11} g_{11} + f_{12} g_{21} & f_{11} g_{12} + f_{12} g_{22} \\
        f_{21} g_{11} + f_{22} g_{21} & f_{21} g_{12} + f_{22} g_{22} \\
    \end{pmatrix},
\end{equation}
is still a entry-wise polynomial of entries of $x_1, x_2, x_3$.
The homogeneity can be justified as,
\begin{equation}
    \begin{split}
        h(\lambda x_1, x_2, x_3) = & \lambda^{s_1^f + s_1^g} h(x_1, x_2,
        x_3), \\
        h(x_1, \lambda x_2, x_3) = & \lambda^{s_2^f} h(x_1, x_2,
        x_3), \text{ and} \\
        h(x_1, x_2, \lambda x_3) = & \lambda^{s_3^g} h(x_1, x_2,
        x_3), \\
    \end{split}
\end{equation}
where $s_1^f$, $s_2^f$, $s_1^g$, and $s_3^g$ are the homogeneity
degree of $f$ and $g$ respectively.  Hence $h$ is also \mhp{}.

Finally, we give an example for the composition property.  Let
$f(x_1,x_2,y)$ and $g(x_1,x_3)$ be two \mhp{} and $g$ is of the same
dimension as $y$. The composition of $f$ and $g$, denoted as
\begin{equation}
    h(x_1, x_2, x_3) = f(x_1, x_2, g(x_1, x_3)),
\end{equation}
is still a entry-wise polynomial of entries of $x_1, x_2, x_3$.
The homogeneity can be justified as,
\begin{equation}
    \begin{split}
        h(\lambda x_1, x_2, x_3) = & \lambda^{s_1^f + s_y^f s_1^g}
        h(x_1, x_2, x_3), \\
        h(x_1, \lambda x_2, x_3) = & \lambda^{s_2^f} h(x_1, x_2,
        x_3), \text{ and} \\
        h(x_1, x_2, \lambda x_3) = & \lambda^{s_y^f s_3^g} h(x_1, x_2,
        x_3), \\
    \end{split}
\end{equation}
where $s_1^f$, $s_2^f$, $s_y^f$, $s_1^g$, and $s_3^g$ are the
homogeneity degree of $f$ and $g$ respectively.  Hence $h$ is also
\mhp{}.

\end{document}